%% file: main.tex
\documentclass{siamart251216}
\input{settings}

\title{Structure-Informed Bounds on the Kronecker Rank of Block-Structured Matrices}
\author{
  Allison Fuller\thanks{School of Mathematical and Statistical Sciences, Arizona State University,
  Tempe, AZ, United States (\email{tjfuller@asu.edu, mespanol@asu.edu}).}
  \and
  Malena I.\ Espa\~{n}ol\footnotemark[1]
  \and
  Misha E.\ Kilmer\thanks{Department of Mathematics, Tufts University, Medford, MA, United States
  (\email{misha.kilmer@tufts.edu})}
}
\date{}

\begin{document}

\maketitle

\begin{abstract}
We derive theoretical bounds on the Kronecker rank of block-structured matrices that possess
both inner and outer structure.  Building on the matrix-to-tensor and tensor-to-matrix framework
of Kilmer and Saibaba (\emph{SIAM J.\ Matrix Anal.\ Appl.}, 2022), we show that the Kronecker
rank of a matrix $\bA$ equals the dimension of the span of its distinct blocks (the
\emph{inner blockspan}), and equals as well the dimension of the corresponding span for a
permuted matrix $\bB$ whose inner and outer structures are interchanged.  We give two proofs of
this equality: one via a direct dimension argument, and one via an explicit isomorphism between
the outer blockspan of $\bA$ and the column space of the second-mode unfolding of the associated
tensor.  These results yield nested containments that translate structural information, such as Toeplitz,
Hankel, banded, or sparse patterns, into computable upper bounds on the Kronecker rank.  We also
establish a duality between element-level sparsity in $\bA$ and block-level sparsity in $\bB$.
Numerical experiments confirm the theory across several classes of structured matrices and, for
two sparse matrices drawn from the SuiteSparse collection, provide a structural explanation for
observed singular value decay.
\end{abstract}

\section{Introduction}
Large, structured matrices arise in many applications, including linear inverse problems and the discretization of ordinary and partial differential equations. Their size often makes storage and computation prohibitively expensive. One effective way to mitigate these costs is to express such matrices as a sum of Kronecker products, thereby exploiting Kronecker structure to reduce storage requirements and accelerate matrix-vector products. 

The classical approach to writing a matrix as a sum of Kronecker products originated with Van Loan and Pitsianis~\cite{loan1992approximation}, and was later expanded by Pitsianis~\cite{pitsianis1997kronecker}. They showed that any matrix $\bA\in\RR{(\ell m)}{(qn)}$ admits the exact representation
\begin{equation*}
    \bA = \sum_{j=1}^r \bC_j\otimes \bD_j,
\end{equation*}
where $\bC_j\in\RR{\ell}{q}$ and $\bD_j\in\RR{m}{n}$ are obtained from singular vectors of a reshuffled version of $\bA$. The integer $r$ is the \emph{Kronecker rank} of $\bA$. Despite its strong theoretical guarantees, this approach is often impractical at large scale, as it requires computing the singular value decomposition of a matrix comparable in size to $\bA$.

Motivated by these limitations, we focus on a structured subclass of matrices for which more scalable Kronecker-based representations may be obtained, namely, large \emph{block-structured matrices}. We use the term \emph{block matrix} to denote a matrix partitioned into submatrices, called \emph{blocks}, and assume throughout that all blocks are of equal size. For example, a matrix $\bA \in \RR{(\ell m)}{(qn)}$ may be viewed as an $\ell \times q$ block matrix with $\ell q$ blocks arranged in an $\ell$-by-$q$ grid, each of size $m\times n$. We adopt a broad notion of \emph{structure}, encompassing both pattern-based structure (e.g., Toeplitz or Hankel matrices) and sparsity-based structure (e.g., diagonal or banded matrices). A \emph{block-structured matrix} is therefore a block matrix whose blocks exhibit such properties.

Kilmer and Saibaba~\cite{kilmer2022matrixtensordecomp} introduced a tensor-based framework tailored to such block-structured matrices. Their method assembles the distinct blocks of $\bA$ into a tensor and approximates it using a Tucker decomposition; when mapped back to the matrix domain, this yields a sum of Kronecker products whose length is determined by the second-mode rank of the tensor. Several examples in that work show that highly structured block matrices can be approximated to within machine precision using only a small number of Kronecker terms. This empirical observation raises a natural question: can one determine \emph{a priori} bounds on the Kronecker rank from the structural properties of the matrix alone?

In this paper, we answer this question by deriving exact characterizations and computable upper bounds on the Kronecker rank of matrices with both outer and inner block structures. Our approach combines the invertible matrix-to-tensor mapping of~\cite{kilmer2022matrixtensordecomp} with subspace dimension arguments defined by the block layout, the internal structure of the blocks, and a stride permutation of $\bA$. The main contributions are as follows.
\begin{itemize}
    \item We show that the Kronecker rank of $\bA$ equals the dimension of the linear span of its distinct blocks (Theorem~\ref{thm:r2=dimC}).
    \item We prove that this dimension is invariant under the permutation that interchanges inner and outer structure, via both a direct dimension argument (Theorem~\ref{thm:dimA=dimB}) and an explicit isomorphism with the column space of the second-mode unfolding (Lemma~\ref{lemma:B_iso_col_A2}).
    \item We establish nested containments linking structural subspaces to the Kronecker rank, yielding computable upper bounds from Toeplitz, Hankel, banded, and related structure (Corollary~\ref{cor:nested}).
    \item We derive a sparsity-based bound and a duality between element-level sparsity in $\bA$ and block-level sparsity in the permuted matrix $\bB$ (Theorems~\ref{thm:sparse} and~\ref{thm:block_sparse}).
    \item We show, through numerical experiments, that these structural bounds give tight upper bounds that explain the singular value decay observed in~\cite{kilmer2022matrixtensordecomp} for specific matrices from the SuiteSparse collection~\cite{davis2011suite}.
\end{itemize}

Prior work on Kronecker rank has focused primarily on matrices whose entries are given by smooth functions~\cite{hackbusch2004kronecker, Tyrtyshnikov2004KPapprox}. In application-driven settings such as image deblurring, related work has examined representations of structured operators as sums of structured Kronecker products~\cite{kamm1998kronecker, kamm2000optimal, kilmer2007tplush, nagy1996decomposition, nagy2004reflexivekron}. In contrast, our results are structure-driven and deterministic, and apply to a broad class of block-structured matrices.

\paragraph{Notation and terminology}
Scalars are italicized lowercase ($a$), vectors bold lowercase ($\ba$), index sets italicized
uppercase ($A$), matrices bold uppercase ($\bA$), tensors of order three or higher calligraphic
uppercase ($\tens{A}$), and subspaces of matrices script uppercase ($\mathscr{A}$).  We write
$[n] = \{1,\dots,n\}$.

A matrix that is symmetric across its main antidiagonal is \emph{persymmetric}.  We write
$\bA^{(\gamma,\delta)}$ for the block of $\bA$ in block-position $(\gamma,\delta)$. If $\ell = q$, we say that a matrix is
\emph{block-symmetric} if $\bA^{(\gamma,\delta)} = \bA^{(\delta,\gamma)}$ for all $(\gamma,\delta)$
with $\gamma,\delta\in[\ell]$, and analogously, a matrix is \emph{block-persymmetric} if $\bA^{(\gamma,\delta)} = \bA^{(\ell-\delta+1,\ell-\gamma+1)}$ .

\paragraph{Organization}
Section~\ref{sec:bckgrnd} reviews the Kronecker product, tensor basics, the Tucker decomposition,
and the matrix-to-tensor framework of~\cite{kilmer2022matrixtensordecomp}.
Section~\ref{sec:sets_and_spaces} defines the sets and subspaces used throughout.
Section~\ref{sec:in_and_out} establishes the connection between blockspans and Kronecker rank, and
gives both proofs of the dimension equality.
Section~\ref{sec:bound_r2} derives structure-informed and sparsity-informed upper bounds.
Section~\ref{sec:num_results} presents numerical experiments.
Section~\ref{sec:conclusion} offers conclusions and directions for future work.

\section{Background}
\label{sec:bckgrnd}

\subsection{The Kronecker Product}
\label{subsec:kron}

The Kronecker product of $\bC\in\RR{\ell}{q}$ and $\bD\in\RR{m}{n}$ is the $(\ell m)\times(qn)$
matrix
\begin{equation*}
    \bC \otimes \bD =
    \begin{bmatrix}
        c_{11}\bD & \cdots & c_{1q}\bD\\
        \vdots    & \ddots & \vdots   \\
        c_{\ell 1}\bD & \cdots & c_{\ell q}\bD
    \end{bmatrix}.
\end{equation*}
The following properties will be used
throughout~\cite{Ballard_Kolda_2025,graham2018kronecker,vanloan2000ubiquitous,magnus1979commutation}:
\begin{enumerate}
    \item $(\bC\otimes\bD)^\top = \bC^\top\otimes\bD^\top$.
    \item $\bC\otimes(\bD+\bE) = (\bC\otimes\bD)+(\bC\otimes\bE)$.
    \item $(\bC\otimes\bD)\bx = \mathrm{vec}(\bD\bX\bC^\top)$, where $\bx=\mathrm{vec}(\bX)$ and
          $\mathrm{vec}(\cdot)$ stacks columns.
    \item There exist permutation matrices $\bP$ and $\bQ$ such that
          $$\bP(\bC\otimes\bD)\bQ^\top = \bD\otimes\bC,$$ where $\bP = \bS_{\ell,m}$,
          $\bQ = \bS_{q,n}$, and
          \begin{equation}
          \label{eq:shuffle_matrix}
              \bS_{a,b} =
              \begin{bmatrix}
                  [\bI_s]_{1:b:s,:}\\
                  [\bI_s]_{2:b:s,:}\\
                  \vdots\\
                  [\bI_s]_{b:b:s,:}
              \end{bmatrix}, \quad s = ab.
          \end{equation}
\end{enumerate}

\subsection{Tensor Basics}
\label{sec:tensor_basics}

We use \emph{tensor} to denote a multiway array; unless otherwise stated, all tensors are
third-order.  By fixing all but one index of $\tens{A}$, we obtain \emph{fibers}; by fixing all
but two indices, we obtain two-dimensional \emph{slices}~\cite{kolda2009tensor} (see Figure~\ref{fig:fr_and_lat}).  We write
$\frontal{A}{k} = \tens{A}_{:,:,k}$ for the $k$th frontal slice and
$\lat{A}{j} = \tens{A}_{:,j,:}$ for the $j$th lateral slice.  The operators
$\mathtt{twist}(\cdot)\colon\RRR{m}{p}{1}\to\RRR{m}{1}{p}$ and
$\mathtt{squeeze}(\cdot)\colon\RRR{m}{1}{p}\to\RRR{m}{p}{1}$ reorient frontal slices as lateral
slices and vice versa~\cite{kilmer2013TensorOperator}. See Figure \ref{fig:sq_and_tw} for a visualization of these operators.

\begin{figure}[t]
    \centering
    \includegraphics[width = 0.65\textwidth]{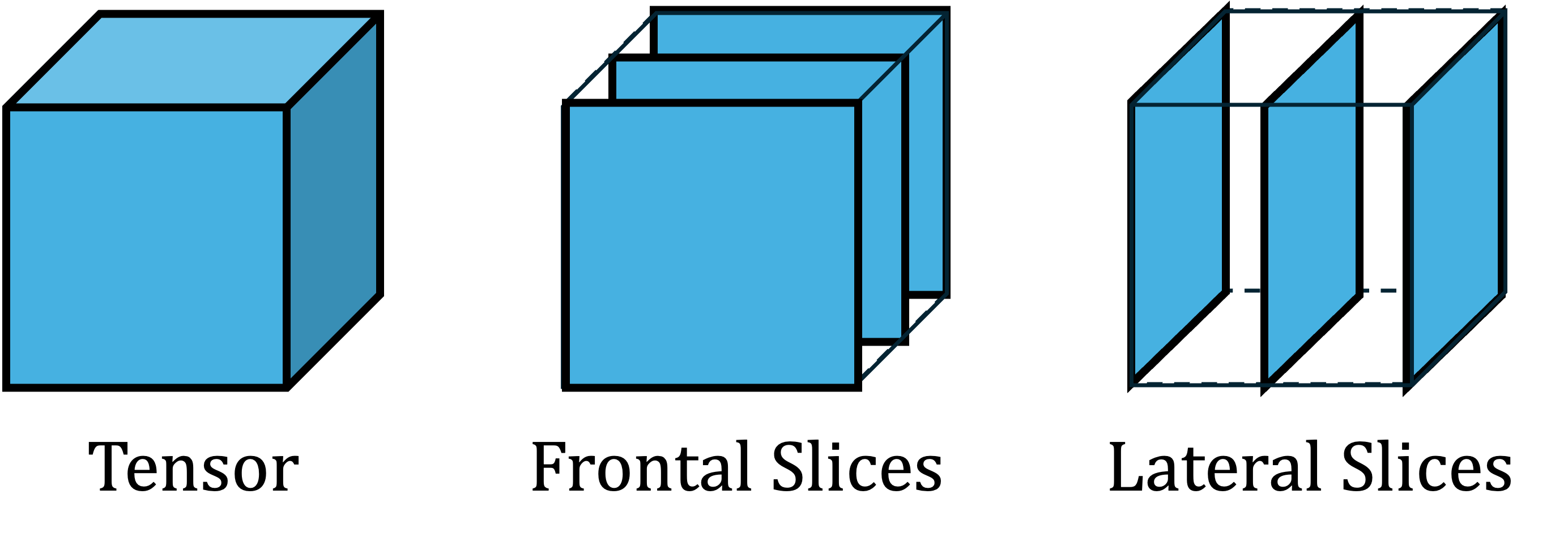}
    \caption{Illustration of a 3-way tensor and two types of slices.}
    \label{fig:fr_and_lat}
\end{figure}

\begin{figure}[t]
    \centering
    \includegraphics[width = \textwidth]{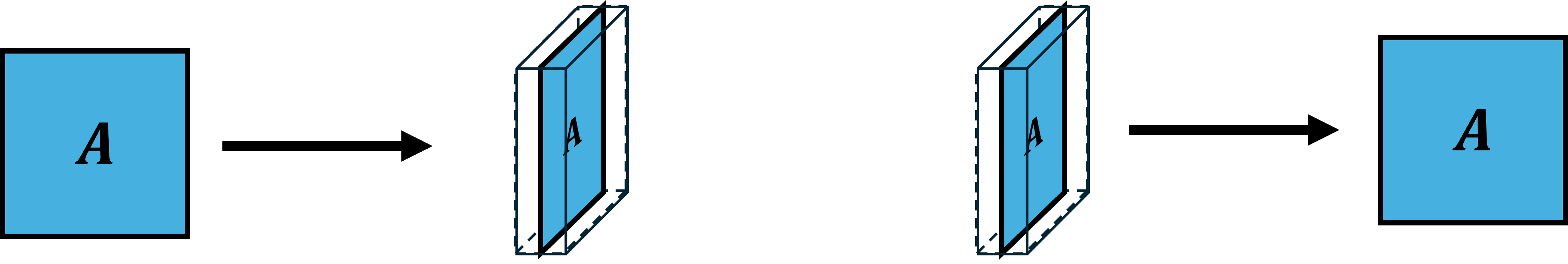}
    \caption{\textbf{Left:} The $\mathtt{twist}(\cdot)$ operator orients a frontal slice as a lateral slice. \textbf{Right:} The $\mathtt{squeeze}(\cdot)$ operator orients a lateral slice as a frontal slice.}
    \label{fig:sq_and_tw}
\end{figure}

The \emph{mode-$i$ unfolding} $\munfold{A}{i}$ is formed by arranging mode-$i$ fibers as columns.
For $\tens{A}\in\RRR{m}{p}{n}$:
\begin{align*}
    \munfold{A}{1} &= \bigl[\frontal{A}{1},\,\frontal{A}{2},\,\dots,\,\frontal{A}{n}\bigr]
                      \in\RR{m}{pn},\\
    \munfold{A}{2} &= \bigl[(\frontal{A}{1})^\top,\,(\frontal{A}{2})^\top,\,\dots,\,
                      (\frontal{A}{n})^\top\bigr]\in\RR{p}{mn},\\
    \munfold{A}{3} &= \bigl[\mathtt{squeeze}(\lat{A}{1}^\top),\,\dots,\,
                      \mathtt{squeeze}(\lat{A}{p}^\top)\bigr]\in\RR{n}{mp}.
\end{align*}
The \emph{multirank} of $\tens{A}$ is $(r_1,r_2,r_3)$ with $r_i=\mathrm{rank}(\munfold{A}{i})$.
\emph{Mode-$i$ multiplication} is defined by
$\tens{C}=\tens{A}\times_i\bB \Leftrightarrow \munfold{C}{i}=\bB\munfold{A}{i}$.

\subsection{The Tucker Decomposition and HOSVD}
\label{sec:tucker}

The Tucker decomposition of $\tens{A}\in\RRR{m}{p}{n}$ is defined by
\begin{equation*}
    \tens{A} = \tens{G}\times_1\bU\times_2\bV\times_3\bW,
\end{equation*}
where $\bU\in\RR{m}{r_1}$, $\bV\in\RR{p}{r_2}$, $\bW\in\RR{n}{r_3}$, and the core tensor
$\tens{G}\in\RRR{r_1}{r_2}{r_3}$~\cite{kolda2009tensor}.  One method for computing this
decomposition is the higher-order SVD (HOSVD)~\cite{delathauwer2000hosvd}, outlined in
Algorithm~\ref{alg:hosvd}.

\begin{algorithm}
    \caption{HOSVD~\protect{\cite{delathauwer2000hosvd}}}
    \label{alg:hosvd}
    \begin{itemize}
        \item Compute $\munfold{A}{i} = \bY_i\bSigma_i\bZ_i^\top$ for $i=1,2,3$.
        \item Set $\bU$, $\bV$, and $\bW$ to be the first $r_1$, $r_2$, $r_3$ columns of
              $\bY_1$, $\bY_2$, and $\bY_3$, respectively, where $r_i = \mathrm{rank}(\munfold{A}{i})$.
        \item Compute $\tens{G} = \tens{A}\times_1\bU^\top\times_2\bV^\top\times_3\bW^\top$.
    \end{itemize}
\end{algorithm}

When the multirank is much smaller than the tensor dimensions, the HOSVD provides an implicitly
compressed representation.  In this work, we focus on the exact HOSVD and, in particular, on
bounds for $r_2$, which will be shown to equal the Kronecker rank.

\subsection{The Matrix-to-Tensor Framework}
\label{sec:mtt}

We now review the construction of \linebreak Kilmer and Saibaba~\cite{kilmer2022matrixtensordecomp}.  Let
$\bA\in\RR{(\ell m)}{(qn)}$ be a block-structured matrix with $p$ distinct blocks
$\bA_k\in\RR{m}{n}$, $k=1,\dots,p$.  Define the \emph{location-tally matrices}
$\bE_k\in\RR{\ell}{q}$ by
\begin{equation}
\label{eq:E}
    [\bE_k]_{ij} =
    \begin{cases}
        1/\sqrt{\eta_k} & \text{if block } \bA_k \text{ occurs at block position }(i,j),\\
        0 & \text{otherwise,}
    \end{cases}
\end{equation}
where $\eta_k$ is the number of occurrences of $\bA_k$.  Then,
\begin{equation}
\label{eq:A_struct_kron}
    \bA = \sum_{k=1}^p \bE_k \otimes \sqrt{\eta_k}\bA_k.
\end{equation}
A visual representation of Equation \eqref{eq:A_struct_kron} is given in Figure \ref{fig:aid_A_and_E} for a block-symmetric block-Toeplitz matrix $\bA$ with three distinct blocks.
\begin{figure}[t]
    \centering
    \includegraphics[width = \textwidth]{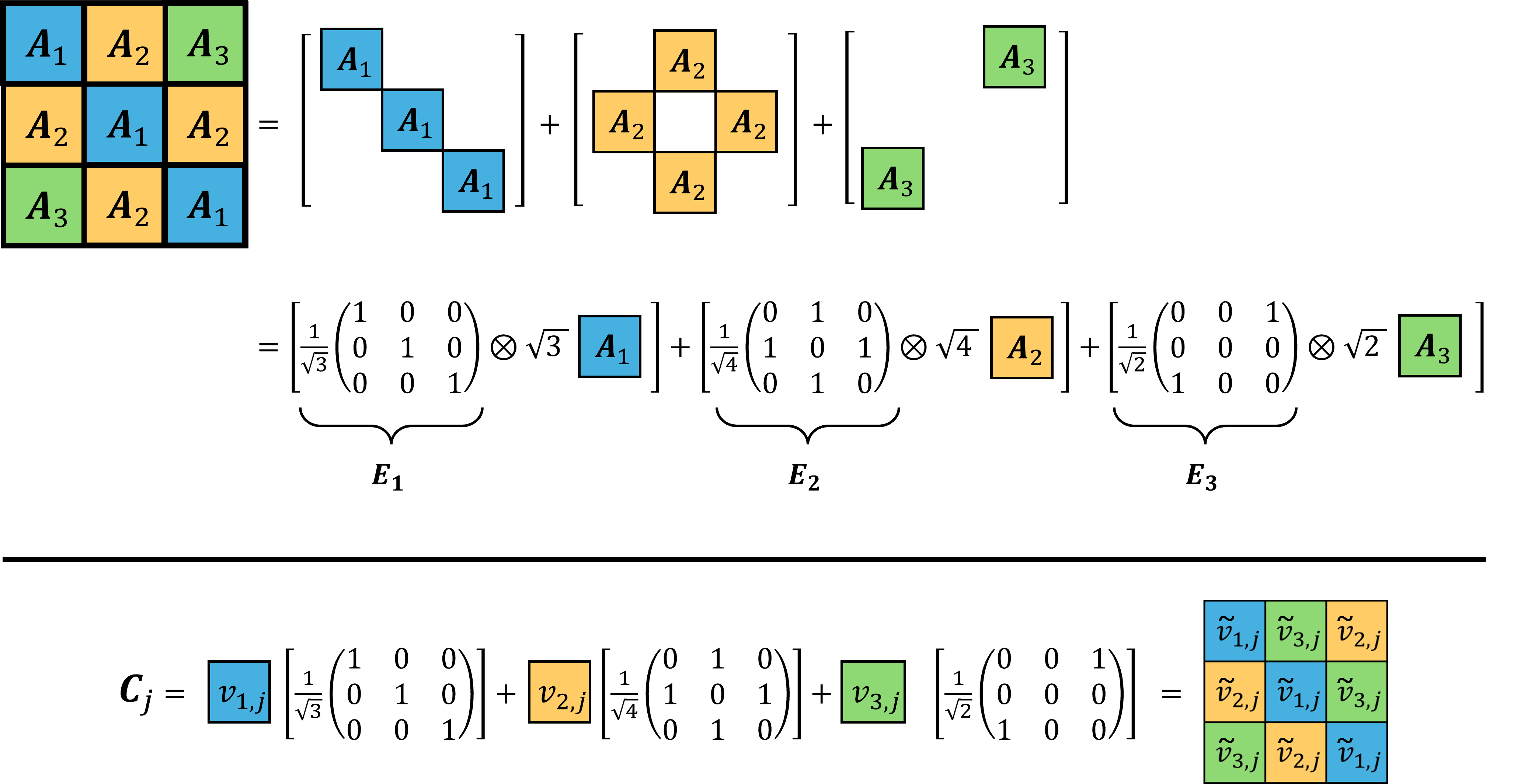}
    \caption{\textbf{Top:} Visual representation of Equation \eqref{eq:A_struct_kron}. On the left is a block-symmetric block-Toeplitz matrix $\bA$ with three distinct blocks. On the right, $\bA$ is written as a sum of Kronecker products of its blocks and the matrices $\bE_k$, as defined in Equation \eqref{eq:E}. \textbf{Bottom:} Visual representation of a matrix $\bC_j$ described in Theorem \ref{thm:r2=dimC} for the same structure. The elements of $\bC_j$ are arranged in the same block pattern as $\bA$. In the right-most matrix, the scaled coefficient $\tilde{v}_{kj} := v_{kj}/\sqrt{\eta_k}$ arises from substituting the definition of $\bE_k$ into Equation \eqref{eq:Tucker_TTM}.}
    \label{fig:aid_A_and_E}
\end{figure}

The \emph{matrix-to-tensor map} $\mathcal{T}\colon\RR{(\ell m)}{(qn)}\to\RRR{m}{p}{n}$ is defined
by setting the $k$th lateral slice of $\tens{A} := \mathcal{T}[\bA]$ to
\begin{equation}
\label{eq:mtt_map}
    \lat{A}{k} = \mathtt{twist}(\sqrt{\eta_k}\bA_k),\quad k=1,\dots,p.
\end{equation}
The inverse \emph{tensor-to-matrix map} $\mathcal{M}\colon\RRR{m}{p}{n}\to\RR{(\ell m)}{(qn)}$
is
\begin{equation*}
    \mathcal{M}[\tens{A}] = \sum_{k=1}^p \bE_k\otimes\mathtt{squeeze}(\lat{A}{k}).
\end{equation*}
If $\tens{A}$ admits the Tucker decomposition
$\tens{A}=\tens{G}\times_1\bU\times_2\bV\times_3\bW$ with
$\tens{G}\in\RRR{r_1}{r_2}{r_3}$, then
\begin{equation}
\label{eq:Tucker_TTM}
    \bA = \mathcal{M}[\tens{A}] = \sum_{j=1}^{r_2}\bC_j\otimes\bD_j,
\end{equation}
where $\bC_j = \sum_{k=1}^p v_{kj}\bE_k$, $\bD_j = \bU\bG_j\bW^\top$,
$v_{kj} = [\bV]_{kj}$, and $\bG_j = \mathtt{squeeze}(\lat{G}{j})$.  The integer $r_2$ is thus
the number of terms in the Kronecker sum produced by this framework.  Figure~\ref{fig:MTT_TTM_diagram}
illustrates the full pipeline.

\begin{figure}[t]
    \centering
    \includegraphics[width=\textwidth]{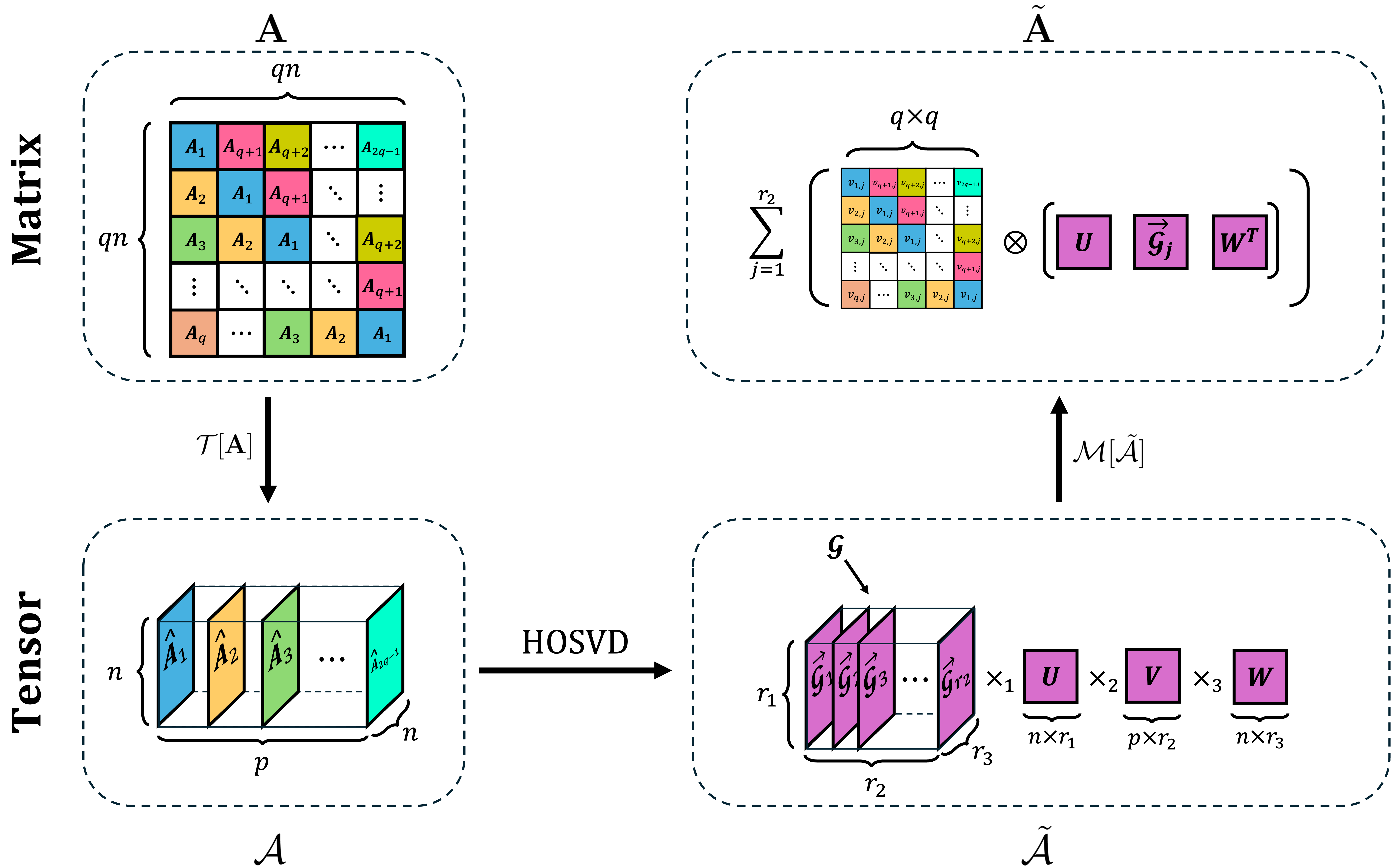}
    \caption{Pipeline for rewriting $\bA$ as a sum of Kronecker products via the MTT map,
             HOSVD, and TTM map of~\protect{\cite{kilmer2022matrixtensordecomp}}.}
    \label{fig:MTT_TTM_diagram}
\end{figure}

\section{Sets, Subspaces, and the Permuted Matrix}
\label{sec:sets_and_spaces}

\subsection{The Permuted Matrix}
\label{subsec:permuted}

By property~4 of the Kronecker product, there exist permutation matrices
$\bP\in\RR{(\ell m)}{(\ell m)}$ and $\bQ\in\RR{(qn)}{(qn)}$ with $\bP=\bS_{\ell,m}$ and
$\bQ=\bS_{q,n}$ such that
\begin{equation*}
    \bP(\bE_k\otimes\sqrt{\eta_k}\bA_k)\bQ^\top = \sqrt{\eta_k}\bA_k\otimes\bE_k.
\end{equation*}
Define $\bB = \bP\bA\bQ^\top \in\RR{(m\ell)}{(nq)}$.  This matrix is block-structured with an
$m\times n$ grid of blocks each of size $\ell\times q$, and satisfies
\begin{equation}
\label{eq:matABF}
    \bA = \bP^\top\bB\bQ = \sum_{\kappa=1}^{\rho}\sqrt{\xi_\kappa}\bB_\kappa\otimes\bF_\kappa,
\end{equation}
where $\bB_\kappa \in \mathbb{R}^{\ell \times q}$ are the $\rho$ distinct blocks of $\bB$, $\xi_\kappa$ is the multiplicity of
$\bB_\kappa$, and $\bF_\kappa\in\RR{m}{n}$ is the corresponding location-tally matrix.  We use
$k,p$ for counts associated with $\bA$ and $\kappa,\rho$ for those associated with $\bB$.
\begin{figure}[t]
    \centering
    \includegraphics[width = \textwidth]{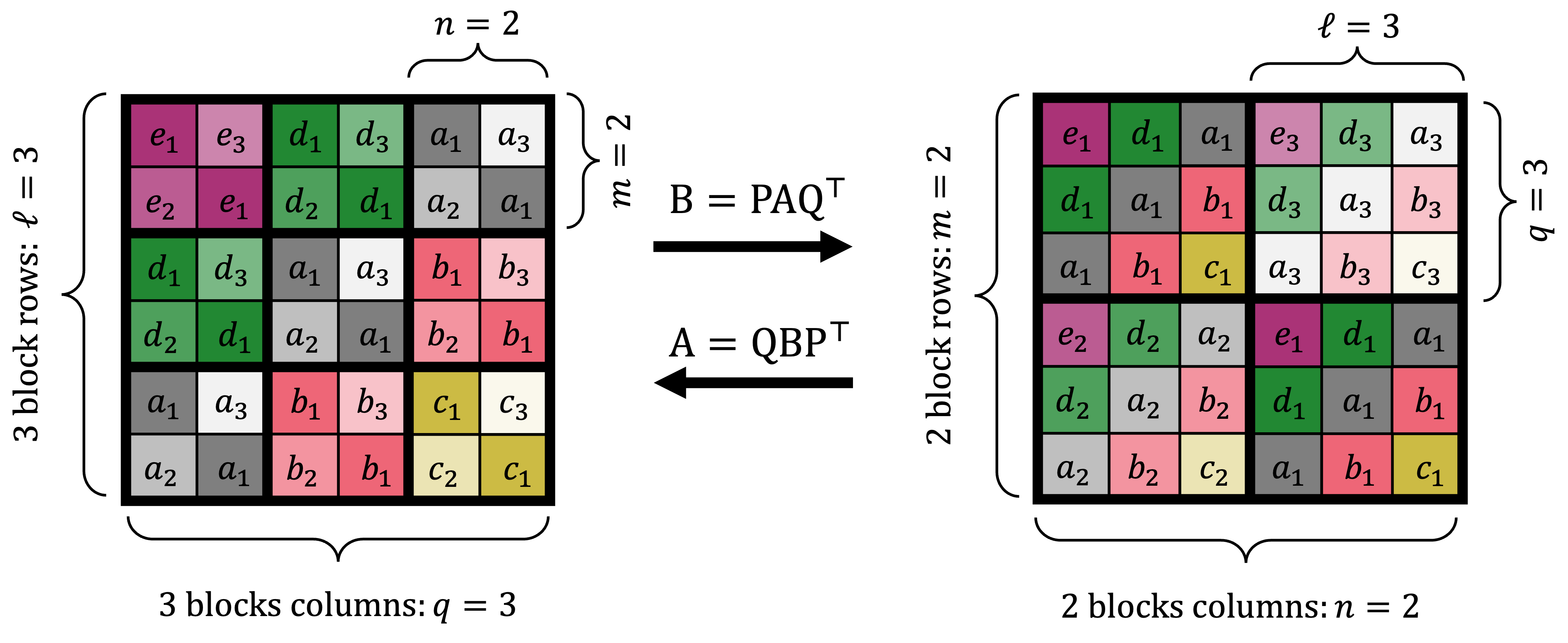}
   \caption{On the left is a block-Hankel matrix $\bA$ with Toeplitz blocks. After permuting $\bA$ with $\bP$ and $\bQ$, we obtain a matrix $\bB$ that is block-Toeplitz with Hankel blocks.}
    \label{fig:permute}
\end{figure}

\begin{figure}[t]
    \centering
    \begin{subfigure}{0.85\linewidth}
    \includegraphics[width = 0.85\textwidth]{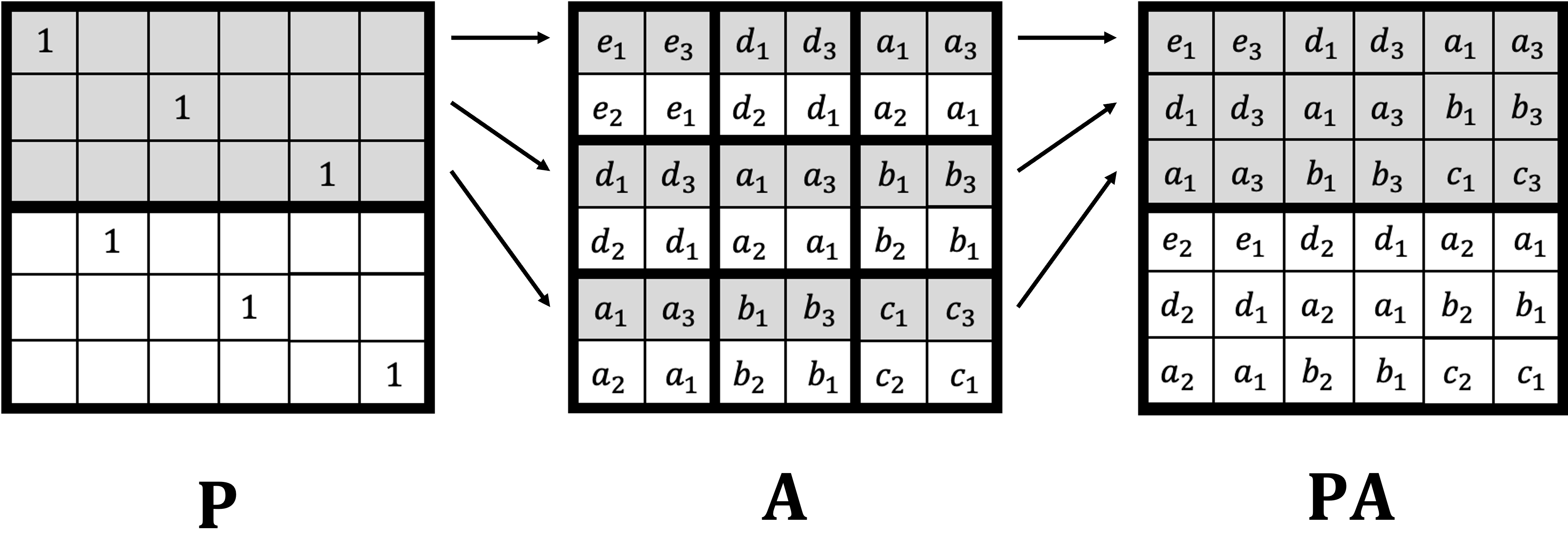}
    \end{subfigure}
    
    \vspace{3mm}
    
    \begin{subfigure}{0.85\linewidth}
    \includegraphics[width = 0.85\textwidth]{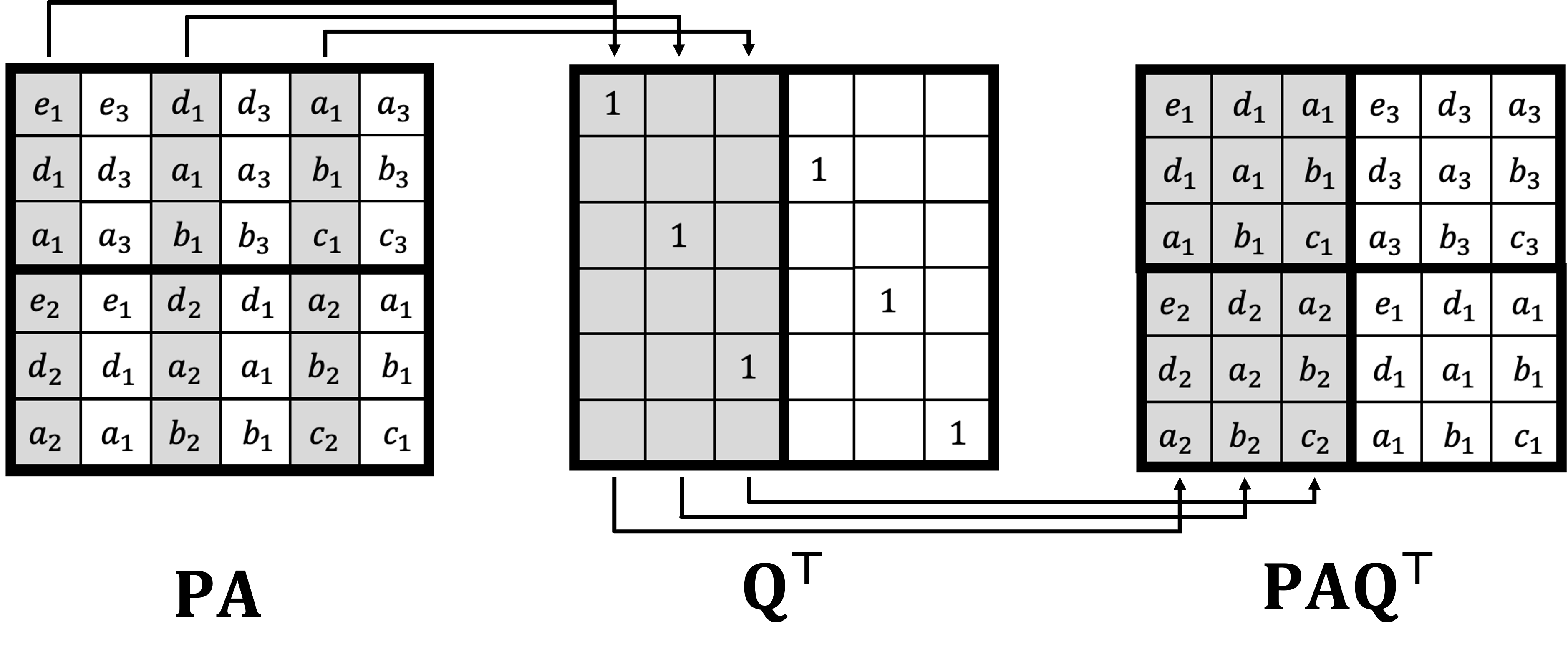}
    \end{subfigure}
    
    \caption{The effects of the permutation matrix $\bP$ on the rows of $\bA$ (Figure \ref{fig:permute}). The $\alpha$th block-row of $\bP = \bS_{\ell,m}$ selects row $\alpha$ from each block-row of $\bA$ and consolidates those rows into block-row $\alpha$ of $\bP\bA$. The matrix $\bQ^\top$ acts analogously on columns.}
    \label{fig:shuffle}
\end{figure}

\begin{example}
    Consider Figure \ref{fig:permute}. On the left, it shows a block-Hankel matrix~$\bA$. The matrix has three block-rows and three block-columns, thus $\ell = q = 3$. Each block of $\bA$ is a $2\times 2$ Toeplitz matrix, thus $m=n=2$. By applying appropriate permutation matrices $\bP$ and $\bQ^\top$ to $\bA$, we get the matrix $\bB$ shown on the right. This matrix is a block-Toeplitz matrix with two block-rows and two block-columns, where each block is a $3\times 3$ Hankel matrix. Thus, the permutation matrices have swapped the outer and inner structures of $\bA$.
\end{example}

Let $(\alpha,\beta)\in[m]\times[n]$ and $(\gamma,\delta)\in[\ell]\times[q]$. Let $\bA^{(\gamma, \delta)}$ denote the block of $\bA$ in block-location $(\gamma,\delta)$, and $\bB^{(\alpha, \beta)}$ the block of $\bB$ in block-location $(\alpha,\beta)$. The following lemma makes the element-wise relationship between $\bA$ and $\bB$ precise.

\begin{lemma}\label{lemma:permutation}
Let $\bA\in\RR{(\ell m)}{(qn)}$ have $\ell q$ blocks of size $m\times n$, and let $\bB = \bP\bA\bQ^\top$ as above. Then,
\begin{equation}
\label{eq:element_swap}
    \bB^{(\alpha,\beta)}_{\gamma\delta} = \bA^{(\gamma,\delta)}_{\alpha\beta}
    \quad\text{for all }(\alpha,\beta)\in[m]\times[n],\;(\gamma,\delta)\in[\ell]\times[q].
\end{equation}
\end{lemma}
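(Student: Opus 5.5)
The plan is to compute both sides in global (row, column) indices and use the explicit form of the stride permutations $\bP=\bS_{\ell,m}$ and $\bQ=\bS_{q,n}$ from \eqref{eq:shuffle_matrix}.

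\textbf{Step 1: index the entries of $\bA$.} Viewing $\bA$ as an $\ell\times q$ grid of $m\times n$ blocks, the entry $\bA^{(\gamma,\delta)}_{\alpha\beta}$ sits at global row $(\gamma-1)m+\alpha$ and global column $(\delta-1)n+\beta$. Likewise, since $\bB$ is an $m\times n$ grid of $\ell\times q$ blocks, the entry $\bB^{(\alpha,\beta)}_{\gamma\delta}$ sits at global row $(\alpha-1)\ell+\gamma$ and global column $(\beta-1)q+\delta$.

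\textbf{Step 2: identify which row of $\bA$ the permutation $\bP$ places at each row of $\bB$.} With $s=\ell m$, the $\alpha$th block-row of $\bS_{\ell,m}$ is $[\bI_s]_{\alpha:m:s,:}$. Its rows are $e_{\alpha}^\top, e_{\alpha+m}^\top,\dots,e_{\alpha+(\ell-1)m}^\top$, so its $\gamma$th row is $e^\top_{(\gamma-1)m+\alpha}$. Row $(\alpha-1)\ell+\gamma$ of $\bP$ is therefore $e^\top_{(\gamma-1)m+\alpha}$. This is the content of Figure~\ref{fig:shuffle}: block-row $\alpha$ of $\bP\bA$ collects row $\alpha$ from each block-row of $\bA$. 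The same argument gives that row $(\beta-1)q+\delta$ of $\bQ$ is $e^\top_{(\delta-1)n+\beta}$. Equivalently, column $(\beta-1)q+\delta$ of $\bQ^\top$ is $e_{(\delta-1)n+\beta}$.

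\textbf{Step 3: read off the entry of $\bB$.} We have
\[
[\bP\bA\bQ^\top]_{(\alpha-1)\ell+\gamma,\,(\beta-1)q+\delta}
= e^\top_{(\gamma-1)m+\alpha}\,\bA\,e_{(\delta-1)n+\beta}
= \bA^{(\gamma,\delta)}_{\alpha\beta}.
\]
Combined with Step 1, this gives \eqref{eq:element_swap}.

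The only real obstacle is the bookkeeping in Step 2: decoding the MATLAB-style slice $1:b:s$ correctly and checking that the index conventions of $\bS_{a,b}$ match the block sizes $m$ and $n$ rather than $\ell$ and $q$.

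A useful consistency check is to apply the formula to a single Kronecker product $\bA=\bC\otimes\bD$. Then $\bA^{(\gamma,\delta)}_{\alpha\beta}=c_{\gamma\delta}d_{\alpha\beta}$, so the lemma predicts $\bB^{(\alpha,\beta)}_{\gamma\delta}=d_{\alpha\beta}c_{\gamma\delta}$, i.e.\ $\bB=\bD\otimes\bC$. This agrees with property~4.

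Alternatively, the lemma can be proved through property~4 and linearity. Matrices of the form $\bC\otimes\bD$ span $\RR{(\ell m)}{(qn)}$, taking $\bC$ and $\bD$ to be elementary matrices. Both sides of \eqref{eq:element_swap} are linear in $\bA$. The Kronecker-product check above therefore establishes the identity on a spanning set, and hence for all $\bA$. I would present this argument as a short remark to confirm the convention.
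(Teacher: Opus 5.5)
Your proof is correct and follows essentially the same route as the paper: you translate block indices to global indices, identify row $(\alpha-1)\ell+\gamma$ of $\bP=\bS_{\ell,m}$ as $\be^\top_{(\gamma-1)m+\alpha}$ (with the analogous column action of $\bQ^\top$), and read off the entry. The Kronecker-product consistency check and the linearity-based alternative are extras not in the paper. They are harmless, though the alternative only restates property~4 rather than independently verifying the index convention.
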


\begin{proof}
The global row and column indices satisfy
\begin{equation}
\label{eq:index_translation}
    \bA^{(\gamma,\delta)}_{\alpha\beta} = \bA_{(\gamma-1)m+\alpha,\;(\delta-1)n+\beta},
    \qquad
    \bB^{(\alpha,\beta)}_{\gamma\delta} = \bB_{(\alpha-1)\ell+\gamma,\;(\beta-1)q+\delta}.
\end{equation}
It therefore suffices to show that
\begin{equation}
\label{eq:goal}
    [\bP\bA\bQ^\top]_{(\alpha-1)\ell+\gamma,\;(\beta-1)q+\delta}
    = \bA_{(\gamma-1)m+\alpha,\;(\delta-1)n+\beta}.
\end{equation}

\medskip\noindent\textit{Action of $\bP$ on rows.}
The shuffle matrix $\bP = \bS_{\ell,m}\in\RR{(\ell m)}{(\ell m)}$ has block structure: its $\alpha$th
block-row (for $\alpha\in[m]$) consists of $\ell$ rows, and its $\gamma$th row within that block
is the standard basis vector $\be_{(\gamma-1)m+\alpha}^\top$ of length $\ell m$.  Consequently,
row $(\alpha-1)\ell+\gamma$ of $\bP\bA$ is row $(\gamma-1)m+\alpha$ of $\bA$:
\begin{equation}
\label{eq:P_action}
    [\bP\bA]_{(\alpha-1)\ell+\gamma,\;j} = \bA_{(\gamma-1)m+\alpha,\;j}
    \quad\text{for all }j.
\end{equation}

\medskip\noindent\textit{Action of $\bQ^\top$ on columns.}
By the same reasoning applied to columns, $\bQ = \bS_{q,n}\in\RR{(qn)}{(qn)}$ satisfies
\begin{equation}
\label{eq:Q_action}
    [\bP\bA\bQ^\top]_{i,\;(\beta-1)q+\delta} = [\bP\bA]_{i,\;(\delta-1)n+\beta}
    \quad\text{for all }i.
\end{equation}
Combining \eqref{eq:P_action} and \eqref{eq:Q_action} with $i=(\alpha-1)\ell+\gamma$ gives
\eqref{eq:goal}, which is equivalent to~\eqref{eq:element_swap}.
\end{proof}

In the example of Figure \ref{fig:permute}, one may verify directly that $\bA^{(2,1)}_{2,2} = \bB^{(2,2)}_{2,1} = d_1$, consistent with \eqref{eq:element_swap}.

\subsection{Sets and Subspaces}
\label{subsec:sets_and_spaces}

Let $A = \{\bA_k\}_{k=1}^p$ be the set of distinct blocks of $\bA$, let
$E = \{\bE_k\}_{k=1}^p$ be the associated location-tally matrices, let
$B = \{\bB_\kappa\}_{\kappa=1}^\rho$ be the distinct blocks of $\bB$, and let
$F = \{\bF_\kappa\}_{\kappa=1}^\rho$ be the corresponding tally matrices.

\begin{definition}
\label{def:subspaces}
Define the following four subspaces:
\begin{itemize}
    \item $\mathscr{A} = \mathrm{span}(A)$: the \emph{inner blockspan} of $\bA$;
    \item $\mathscr{B} = \mathrm{span}(B)$: the \emph{outer blockspan} of $\bA$; 
    \item $\mathscr{E} = \mathrm{span}(E)$: the \emph{outer structure space} of $\bA$;
    \item $\mathscr{F} = \mathrm{span}(F)$: the \emph{inner structure space} of $\bA$.
\end{itemize}
\end{definition}

\begin{proposition}
\label{lemma:subspace_containment}
With the notation above, $\mathscr{B}\subseteq\mathscr{E}$ and $\mathscr{A}\subseteq\mathscr{F}$.
\end{proposition}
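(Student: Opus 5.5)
The plan is to show $\mathscr{B}\subseteq\mathscr{E}$ by writing each distinct block of $\bB$ explicitly as a linear combination of the $\bE_k$. The statement $\mathscr{A}\subseteq\mathscr{F}$ will then follow by the symmetric argument, with the roles of $\bA$ and $\bB$ interchanged.

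Fix a block position $(\alpha,\beta)\in[m]\times[n]$ of $\bB$. By Lemma~\ref{lemma:permutation}, the $(\gamma,\delta)$ entry of $\bB^{(\alpha,\beta)}$ is $\bA^{(\gamma,\delta)}_{\alpha\beta}$. The block $\bA^{(\gamma,\delta)}$ equals some distinct block $\bA_k$, namely the unique $k$ with $[\bE_k]_{\gamma\delta}\neq 0$. Since the location-tally matrices have disjoint supports covering all of $[\ell]\times[q]$, I claim that
\begin{equation*}
    \bB^{(\alpha,\beta)} = \sum_{k=1}^p \sqrt{\eta_k}\,[\bA_k]_{\alpha\beta}\,\bE_k .
\end{equation*}
To verify it, evaluate the right-hand side at entry $(\gamma,\delta)$. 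Only the unique $k$ with $\bA^{(\gamma,\delta)}=\bA_k$ contributes, and it gives $\sqrt{\eta_k}[\bA_k]_{\alpha\beta}/\sqrt{\eta_k} = \bA^{(\gamma,\delta)}_{\alpha\beta}$. An equivalent route is to apply $\bP(\cdot)\bQ^\top$ to \eqref{eq:A_struct_kron}, which gives $\bB=\sum_k \sqrt{\eta_k}\bA_k\otimes\bE_k$, and then read off block $(\alpha,\beta)$ from the definition of the Kronecker product. Either way, every block of $\bB$, and in particular every distinct block $\bB_\kappa$, lies in $\mathscr{E}$, so $\mathscr{B}\subseteq\mathscr{E}$.

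For the second containment, I use \eqref{eq:matABF}. Applying the same Kronecker-product reading to $\bA=\sum_\kappa \sqrt{\xi_\kappa}\bB_\kappa\otimes\bF_\kappa$ shows that block $(\gamma,\delta)$ of $\bA$ is
\begin{equation*}
    \bA^{(\gamma,\delta)} = \sum_{\kappa=1}^{\rho}\sqrt{\xi_\kappa}[\bB_\kappa]_{\gamma\delta}\bF_\kappa .
\end{equation*}
Hence each $\bA_k\in\mathscr{F}$, and so $\mathscr{A}\subseteq\mathscr{F}$.

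The only point needing care is that the supports of the $\bE_k$ (respectively $\bF_\kappa$) partition the block grid. This holds because each block position holds exactly one distinct block, and it is what ensures the normalizations $1/\sqrt{\eta_k}$ and $\sqrt{\eta_k}$ cancel exactly.
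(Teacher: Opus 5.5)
Your proposal is correct and is essentially the paper's argument: the paper also uses Lemma~\ref{lemma:permutation} to write $\bB_\kappa=\sum_{k} b_\kappa^k\sqrt{\eta_k}\,\bE_k$ with $b_\kappa^k=[\bA_k]_{\alpha\beta}$, and gets $\mathscr{A}\subseteq\mathscr{F}$ by the same reasoning applied to $\bB$ (your direct reading of blocks from \eqref{eq:matABF} is that argument made explicit). One small correction: in the paper's convention zero blocks are not counted among the distinct blocks (e.g.\ $p=295$ for \texttt{t2d\_q4}, and $\bzero\notin B$ in Theorem~\ref{thm:block_sparse}), so the supports of the $\bE_k$ need not cover the whole block grid; your entrywise check still holds at zero-block positions because both sides vanish, and your Kronecker route via $\bB=\sum_k\sqrt{\eta_k}\bA_k\otimes\bE_k$ avoids the issue entirely.
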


\begin{proof}
By Lemma~\ref{lemma:permutation}, each $\bB_\kappa\in B$ has entries
$b_\kappa^k := [\bB_\kappa]_{\gamma\delta}$ (for any $(\gamma,\delta)$ such that
$\bA^{(\gamma,\delta)}=\bA_k$) placed in the same pattern as $\bA_k$ occurs in $\bA$.  Hence
$\bB_\kappa = \sum_{k=1}^p b_\kappa^k(\sqrt{\eta_k}\bE_k)$, so every generator of $\mathscr{B}$
lies in $\mathscr{E}$, giving $\mathscr{B}\subseteq\mathscr{E}$.  The containment
$\mathscr{A}\subseteq\mathscr{F}$ follows by the same argument applied to $\bB$.
\end{proof}

\section{Blockspans and Kronecker Rank}
\label{sec:in_and_out}
In this section, we state and prove our main theoretical results connecting the blockspans and Kronecker rank.

\subsection{Kronecker Rank Equals Dimension of the Inner Blockspan}

\begin{theorem}
\label{thm:r2=dimC}
Let $\bA$ be a block-structured matrix with $\ell\times q$ blocks of size $m\times n$, and let
$\tens{A} = \mathcal{T}[\bA]$ be the associated rank-$(r_1,r_2,r_3)$ tensor.  Then,
$r_2 = \dim(\mathscr{A})$.
\end{theorem}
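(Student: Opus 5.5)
The plan is to identify $r_2=\mathrm{rank}(\munfold{A}{2})$ with the dimension of the row space of $\munfold{A}{2}$, and then show that this row space is isomorphic to $\mathscr{A}$. By definition, $r_2$ equals the dimension of the span of the rows of $\munfold{A}{2}\in\RR{p}{mn}$. The first step is to locate these rows. The $k$th row of $\munfold{A}{2}$ collects the entries $\tens{A}_{i,k,j}$ for $i\in[m]$ and $j\in[n]$, which are exactly the entries of the $k$th lateral slice $\lat{A}{k}$. By the matrix-to-tensor map \eqref{eq:mtt_map}, $\mathtt{squeeze}(\lat{A}{k})=\sqrt{\eta_k}\bA_k$. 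So, up to a fixed reordering of entries, the $k$th row of $\munfold{A}{2}$ is $\mathrm{vec}(\sqrt{\eta_k}\bA_k)^\top$. I will verify this against the explicit formula $\munfold{A}{2}=[(\frontal{A}{1})^\top,\dots,(\frontal{A}{n})^\top]$: the $k$th row of $(\frontal{A}{j})^\top$ is the $k$th column of the frontal slice, namely $(\tens{A}_{1,k,j},\dots,\tens{A}_{m,k,j})$, which is the $j$th column of $\sqrt{\eta_k}\bA_k$. Concatenating over $j$ therefore gives precisely $\mathrm{vec}(\sqrt{\eta_k}\bA_k)^\top$, with no extra permutation needed.

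The second step uses the linear isomorphism $\mathrm{vec}\colon\RR{m}{n}\to\mathbb{R}^{mn}$. It maps $\mathrm{span}\{\sqrt{\eta_k}\bA_k\}_{k=1}^p$ onto $\mathrm{span}$ of the rows of $\munfold{A}{2}$, read as column vectors. Since $\eta_k\ge 1$, each scaling factor $\sqrt{\eta_k}$ is nonzero, so
\begin{equation*}
\mathrm{span}\{\sqrt{\eta_k}\bA_k\}_{k=1}^p=\mathrm{span}\{\bA_k\}_{k=1}^p=\mathscr{A}.
\end{equation*}
Combining the two steps, $r_2=\dim\mathrm{row}(\munfold{A}{2})=\dim\mathscr{A}$.

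I expect the only real obstacle to be the bookkeeping of the unfolding convention, which is handled by the check in the first step. Any consistent reordering of entries would in any case be an isomorphism and would not change the dimension. One further remark ties the result to \eqref{eq:Tucker_TTM}: for any Tucker factorization, $r_2$ equals the number of terms in the Kronecker sum. This link to Kronecker rank is not needed for the equality itself.
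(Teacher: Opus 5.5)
Your proof is correct and follows the paper's argument essentially step for step. You identify the $k$th row of $\munfold{A}{2}$ as $\mathrm{vec}(\sqrt{\eta_k}\bA_k)^\top$ via the frontal-slice form of the unfolding, then use that $\mathrm{vec}$ is an isomorphism and that the nonzero scalings $\sqrt{\eta_k}$ leave the span unchanged, a point the paper uses without stating. One small caveat on your closing aside: the number of terms in \eqref{eq:Tucker_TTM} equals $r_2$ only when the core's second dimension is minimal (e.g., the exact HOSVD), not for an arbitrary Tucker factorization, though as you note this does not affect the theorem.
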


\begin{proof}
Since $\lat{A}{k} = \mathtt{twist}(\sqrt{\eta_k}\bA_k)$, the $i$th column of $\sqrt{\eta_k}\bA_k$
is the $k$th column of $\frontal{A}{i}$, hence the $k$th row of $(\frontal{A}{i})^\top$.  Thus,
\begin{equation}
\label{eq:A2_row_form}
    \munfold{A}{2} =
    \begin{bmatrix}
        \ba_1^\top \\ \ba_2^\top \\ \vdots \\ \ba_p^\top
    \end{bmatrix}\in\RR{p}{mn},
\end{equation}
where $\ba_k = \mathrm{vec}(\sqrt{\eta_k}\bA_k)$. See Figure \ref{fig:A_A_A2} for a visualization. Therefore,
$$r_2 = \mathrm{rank}(\munfold{A}{2}) = \dim(\mathrm{span}\{\ba_k\}) =
\dim(\mathrm{span}\{\bA_k\}) = \dim(\mathscr{A}).$$
This concludes the proof.
\end{proof}

\begin{figure}[t]
    \centering
    \includegraphics[width = \textwidth]{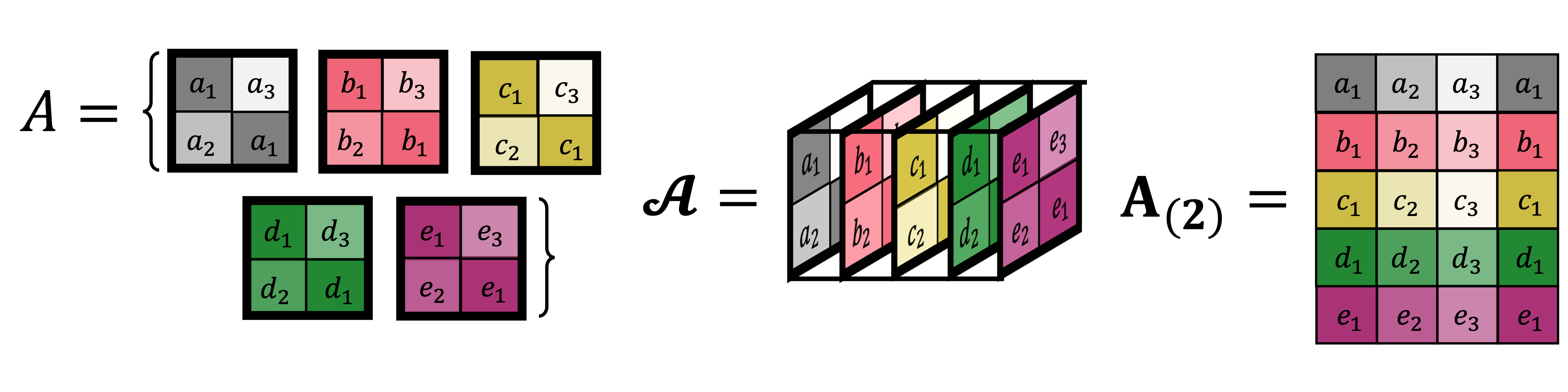}
   \caption{\textbf{Left:} The unique blocks of the matrix $\bA$ shown in Figure \ref{fig:permute}. \textbf{Center:} The tensor $\tens{A}$ mapped from $\bA$. \textbf{Right:} The second mode unfolding $\munfold{A}{2}$ of $\tens{A}$.}
    \label{fig:A_A_A2}
\end{figure}

\begin{remark}
\label{rem:reshuffled}
The matrix $\munfold{A}{2}$ in~\eqref{eq:A2_row_form} is the reshuffled matrix
$\mathscr{R}(\bA)$ of Van Loan and Pitsianis~\protect{\cite{vanloan2000ubiquitous, loan1992approximation}}
with (i) rows weighted by $\sqrt{\eta_k}$ and (ii) duplicate rows removed.  Both matrices, therefore, have the same row space and the same rank.  This is consistent with the
equivalence between the two frameworks established in prior
work~\protect{\cite{fuller2026tensor,kilmer2022matrixtensordecomp}}.
\end{remark}

\subsection{Equality of Inner and Outer Blockspan Dimensions: First Proof}\label{sec:first_proof}

Because $\bA = \bP^\top\bB\bQ$, we can write $\bA$ as a sum of $r_2$ or $\bar r_2 :=
\dim(\mathscr{B})$ Kronecker products.  The following theorem shows that these two quantities coincide.

\begin{theorem}
\label{thm:dimA=dimB}
Let $\bA$ be a block-structured matrix and let $\bB = \bP\bA\bQ^\top$ as above.  Then,
$\dim(\mathscr{A}) = \dim(\mathscr{B})$.
\end{theorem}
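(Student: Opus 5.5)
The approach is to identify both dimensions with ranks of two matrices that are transposes of one another. Consider the $(\ell q)\times(mn)$ matrix $\bR$ whose row indexed by the block position $(\gamma,\delta)$ is $\mathrm{vec}(\bA^{(\gamma,\delta)})^\top$. This is the reshuffled matrix $\mathscr{R}(\bA)$ of Remark~\ref{rem:reshuffled}, in some row ordering. Its row space is spanned by the vectorized blocks of $\bA$. Duplicate rows do not change the span, so the row space equals $\mathrm{span}\{\mathrm{vec}(\bA_k)\}_{k=1}^p$. Because $\mathrm{vec}$ is a linear isomorphism, $\mathrm{rank}(\bR)=\dim(\mathscr{A})$. (This is also the content of Theorem~\ref{thm:r2=dimC}: $\munfold{A}{2}$ is $\bR$ with rows scaled by $\sqrt{\eta_k}>0$ and duplicates removed.)

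Next I would build the analogous matrix for $\bB$. Let $\bR'$ be the $(mn)\times(\ell q)$ matrix whose row indexed by $(\alpha,\beta)$ is $\mathrm{vec}(\bB^{(\alpha,\beta)})^\top$. The same argument, now using the distinct blocks $\bB_\kappa$, gives $\mathrm{rank}(\bR')=\dim(\mathscr{B})$.

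The key step is Lemma~\ref{lemma:permutation}. Index the columns of $\bR$ by the inner position $(\alpha,\beta)$, with the ordering induced by $\mathrm{vec}$ on $m\times n$ matrices. Index the columns of $\bR'$ by $(\gamma,\delta)$, with the ordering induced by $\mathrm{vec}$ on $\ell\times q$ matrices. Then
\[
[\bR]_{(\gamma,\delta),(\alpha,\beta)} = \bA^{(\gamma,\delta)}_{\alpha\beta} = \bB^{(\alpha,\beta)}_{\gamma\delta} = [\bR']_{(\alpha,\beta),(\gamma,\delta)}.
\]
If the row orderings are chosen to match these column orderings, this says $\bR'=\bR^\top$ exactly. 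Otherwise $\bR'$ equals $\bR^\top$ up to row and column permutations, which do not affect rank. Therefore $\dim(\mathscr{B})=\mathrm{rank}(\bR^\top)=\mathrm{rank}(\bR)=\dim(\mathscr{A})$, since row rank equals column rank.

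The only delicate part is the index bookkeeping: showing that the vectorization orderings and the block-position orderings really line up, or at least differ only by permutations. Lemma~\ref{lemma:permutation} handles the substantive part, so I expect this to be routine. It is also worth saying explicitly that passing from the set of all blocks to the set of distinct blocks (with the weights $\sqrt{\eta_k}$ or $\sqrt{\xi_\kappa}$) changes neither the spans nor the ranks.
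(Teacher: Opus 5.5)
Your argument is correct and is essentially the paper's second proof (Lemma~\ref{lemma:B_iso_col_A2}). There too, the blocks of $\bB$ are identified with the columns of $\munfold{A}{2}$, whose rows are the weighted vectorized blocks of $\bA$, and the equality of row rank and column rank finishes the argument. Because you use the unweighted reshuffled matrix with duplicate rows kept, that identification becomes the exact identity $\mathbf{R}'=\mathbf{R}^\top$. This spares you the weights $\sqrt{\eta_k}$ and the maps $\mathtt{vec}_{\mathscr{E}}$, $\mathtt{mat}_{\mathscr{E}}$. The paper's first proof is genuinely different: it expands each $\bB_\kappa$ in a basis of $\mathscr{B}$ to get $\bA=\sum_j\bT_j\otimes\hat{\bF}_j$, hence $\dim(\mathscr{A})\le\dim(\mathscr{B})$, and then swaps the roles of $\bA$ and $\bB$.
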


\begin{proof}
Let $\bar r_2 = \dim(\mathscr{B})$ and let $\{\bT_1,\dots,\bT_{\bar r_2}\}$ be a basis for $\mathscr{B}$.  For each $\bB_\kappa\in B$, write $\bB_\kappa = \sum_{j=1}^{\bar r_2}\phi_j^\kappa\bT_j$.  Substituting into~\eqref{eq:matABF},
\begin{equation*}
    \bA = \sum_{\kappa=1}^\rho\sqrt{\xi_\kappa}
          \Bigl(\sum_{j=1}^{\bar r_2}\phi_j^\kappa\bT_j\Bigr)\otimes\bF_\kappa
        = \sum_{j=1}^{\bar r_2}\bT_j\otimes\hat{\bF}_j,
\end{equation*}
where $\hat{\bF}_j = \sum_{\kappa=1}^\rho\phi_j^\kappa\sqrt{\xi_\kappa}\bF_\kappa$.  Every block
$\bA^{(\gamma,\delta)} = \sum_{j=1}^{\bar r_2}[\bT_j]_{\gamma\delta}\,\hat{\bF}_j$ lies in
\linebreak $\mathrm{span}\{\hat{\bF}_j\}_{j=1}^{\bar r_2}$, so $\dim(\mathscr{A})\le\bar r_2 = \dim(\mathscr{B})$.

For the reverse inequality, observe that $\bB$ is itself a block-structured matrix: it has an $m\times n$
block grid with blocks of size $\ell\times q$.  By Lemma~\ref{lemma:permutation}, the $(\gamma,\delta)$
entry of block $\bB^{(\alpha,\beta)}$ equals $[\bA^{(\gamma,\delta)}]_{\alpha\beta}$; that is, each
block of $\bB$ is determined by a fixed entry position $(\alpha,\beta)$ within the blocks of $\bA$.
Two positions $(\alpha,\beta)$ and $(\alpha',\beta')$ produce the same block of $\bB$ if and only if
the corresponding entries agree across all blocks of $\bA$, i.e.,
$[\bA^{(\gamma,\delta)}]_{\alpha\beta} = [\bA^{(\gamma,\delta)}]_{\alpha'\beta'}$ for all
$(\gamma,\delta)$.  The inner blockspan of $\bB$ is $\mathscr{B}$, and its outer blockspan
is $\mathscr{A}$. Applying the preceding argument to $\bB$ in place of
$\bA$ gives $\dim(\mathscr{B})\le\dim(\mathscr{A})$, completing the proof.
\end{proof}

\subsection{Equality of Inner and Outer Blockspan Dimensions: Second Proof via Isomorphism}
\label{sec:second_proof}

The first proof establishes the equality by a dimension argument.  Here we give a second proof by showing that the outer blockspan $\mathscr{B}$ is isomorphic to the column space of
$\munfold{A}{2}$, whose dimension is $r_2 = \dim(\mathscr{A})$ by Theorem~\ref{thm:r2=dimC}.

Recall from the proof of Proposition~\ref{lemma:subspace_containment} that each $\bB_\kappa\in B$
satisfies $\bB_\kappa = \sum_{k=1}^p b_\kappa^k(\sqrt{\eta_k}\bE_k)$.  Define two linear maps:
\begin{equation}
\label{eq:vec_mat_ops}
    \mathtt{vec}_{\mathscr{E}}(\bB_\kappa)
    = \begin{bmatrix}\sqrt{\eta_1}b_\kappa^1\\\vdots\\\sqrt{\eta_p}b_\kappa^p\end{bmatrix}\in\mathbb{R}^p \quad  \mbox{ and } \quad
    \mathtt{mat}_{\mathscr{E}}(\bv) = \sum_{k=1}^p v_k\bE_k,\quad\bv\in\mathbb{R}^p.
\end{equation}
Both maps extend linearly to their respective domains.

\begin{lemma}
\label{lemma:B_iso_col_A2}
$\mathtt{vec}_{\mathscr{E}}(\cdot)$ is a linear bijection from $\mathscr{B}$ to
$\mathrm{colspace}(\munfold{A}{2})$, with inverse $\mathtt{mat}_{\mathscr{E}}(\cdot)$.  Hence
$\mathscr{B}\cong\mathrm{colspace}(\munfold{A}{2})$.
\end{lemma}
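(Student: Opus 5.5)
The plan is to identify the vectors $\mathtt{vec}_{\mathscr{E}}(\bB_\kappa)$ with the columns of $\munfold{A}{2}$, then check injectivity and that $\mathtt{mat}_{\mathscr{E}}$ inverts the map.

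First, $\mathtt{vec}_{\mathscr{E}}$ is well defined on $\mathscr{B}$. The matrices $\{\bE_k\}$ have disjoint supports, since each block position carries exactly one distinct block. Hence they are linearly independent, and $\mathscr{B}\subseteq\mathscr{E}$ by Proposition~\ref{lemma:subspace_containment}. So every $\bX\in\mathscr{B}$ has unique coefficients $c_k$ with $\bX=\sum_k c_k\bE_k$. I will define $\mathtt{vec}_{\mathscr{E}}(\bX)=(c_1,\dots,c_p)^\top$. For $\bX=\bB_\kappa$ we have $\bB_\kappa=\sum_k b_\kappa^k\sqrt{\eta_k}\bE_k$, so $c_k=\sqrt{\eta_k}b_\kappa^k$, which matches \eqref{eq:vec_mat_ops}. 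By construction this map is linear and injective (it is the coordinate map in the basis $\{\bE_k\}$ of $\mathscr{E}$), and $\mathtt{mat}_{\mathscr{E}}(\mathtt{vec}_{\mathscr{E}}(\bX))=\bX$.

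Second, I identify the image. By \eqref{eq:A2_row_form}, the $k$th row of $\munfold{A}{2}$ is $\mathrm{vec}(\sqrt{\eta_k}\bA_k)^\top$. So the column of $\munfold{A}{2}$ indexed by entry position $(\alpha,\beta)$ of the blocks is the vector $(\sqrt{\eta_k}[\bA_k]_{\alpha\beta})_{k=1}^p$. Lemma~\ref{lemma:permutation} gives $[\bB^{(\alpha,\beta)}]_{\gamma\delta}=[\bA^{(\gamma,\delta)}]_{\alpha\beta}$. Therefore, if $\bB^{(\alpha,\beta)}=\bB_\kappa$, then $b_\kappa^k=[\bA_k]_{\alpha\beta}$, and $\mathtt{vec}_{\mathscr{E}}(\bB_\kappa)$ is exactly the $(\alpha,\beta)$ column of $\munfold{A}{2}$. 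Every column arises this way, since each position $(\alpha,\beta)$ carries some distinct block $\bB_\kappa$. Conversely, each $\bB_\kappa$ occurs at some position. So the set $\{\mathtt{vec}_{\mathscr{E}}(\bB_\kappa)\}_\kappa$ equals the set of columns of $\munfold{A}{2}$, duplicates aside. By linearity, $\mathtt{vec}_{\mathscr{E}}(\mathscr{B})=\mathrm{colspace}(\munfold{A}{2})$.

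Injectivity plus this surjectivity gives the bijection. Its inverse on $\mathrm{colspace}(\munfold{A}{2})$ is the restriction of $\mathtt{mat}_{\mathscr{E}}$: for $\bv$ in the column space, $\bv=\mathtt{vec}_{\mathscr{E}}(\bX)$ for some $\bX\in\mathscr{B}$, and then $\mathtt{mat}_{\mathscr{E}}(\bv)=\bX$. Combined with Theorem~\ref{thm:r2=dimC}, this recovers Theorem~\ref{thm:dimA=dimB}.

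The step I expect to be the main obstacle is the index bookkeeping in the second step. I need to confirm that the column ordering of $\munfold{A}{2}$ (column-major over $(\alpha,\beta)$, via $\mathrm{vec}$) matches the entries $[\bA_k]_{\alpha\beta}$ with the $\sqrt{\eta_k}$ weights in the right place. I also need to check that $b_\kappa^k$ is independent of which occurrence $(\gamma,\delta)$ of $\bA_k$ is used; this holds because all occurrences of $\bA_k$ are identical blocks.
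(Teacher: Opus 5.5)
Your proposal is correct and follows essentially the same route as the paper. Both arguments rest on the identity, from Lemma~\ref{lemma:permutation}, that $\mathtt{vec}_{\mathscr{E}}(\bB_\kappa)$ equals the $(\alpha,\beta)$ column of $\munfold{A}{2}$ whenever $\bB^{(\alpha,\beta)}=\bB_\kappa$, and then check that $\mathtt{mat}_{\mathscr{E}}$ inverts $\mathtt{vec}_{\mathscr{E}}$. Your explicit observation that $\mathtt{vec}_{\mathscr{E}}$ is the coordinate map with respect to the disjointly supported, hence linearly independent, $\bE_k$ is a useful addition: it justifies the well-definedness of the linear extension and the injectivity that the paper's ``direct substitution'' in Step~3 leaves implicit.
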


\begin{proof}
\textit{Step 1: $\mathtt{vec}_{\mathscr{E}}$ maps $\mathscr{B}$ into $\mathrm{colspace}(\munfold{A}{2})$.}
Every $\bT\in\mathscr{B}$ writes as $\bT = \sum_\kappa \zeta_\kappa\bB_\kappa$, so by linearity
\[ \mathtt{vec}_{\mathscr{E}}(\bT)
= \sum_{\kappa=1}^{\rho}\zeta_\kappa
\begin{bmatrix}\sqrt{\eta_1}b_\kappa^1 \\ \vdots\\ \sqrt{\eta_p}b_\kappa^p\end{bmatrix}.
\]
By Equation~\eqref{eq:A2_row_form}, column $(\alpha,\beta)$ of $\munfold{A}{2}$ is the vector
$([\bA_1]_{\alpha\beta}\sqrt{\eta_1},\dots,[\bA_p]_{\alpha\beta}\sqrt{\eta_p})^\top$.
By Lemma~\ref{lemma:permutation}, $[\bB_\kappa]_{\gamma\delta} = [\bA^{(\gamma,\delta)}]_{\alpha\beta}$
for any $(\alpha,\beta)$ with $\bB^{(\alpha,\beta)}=\bB_\kappa$, so \linebreak
$(\sqrt{\eta_1}b_\kappa^1,\dots,\sqrt{\eta_p}b_\kappa^p)^\top$ is exactly column $(\alpha,\beta)$
of $\munfold{A}{2}$.  Two distinct blocks $\bB_\kappa\ne\bB_{\kappa'}$ produce distinct column vectors (if they agreed entrywise, they would not be distinct blocks by definition); multiple positions $(\alpha,\beta)$ with the same block $\bB_\kappa$ produce identical columns, but the span is unchanged.  Hence
$\mathtt{vec}_{\mathscr{E}}(\bT)\in\mathrm{colspace}(\munfold{A}{2})$ and thus $\mathtt{vec}_{\mathscr{E}}(\mathscr{B})\subseteq\mathrm{colspace}(\munfold{A}{2})$.

\textit{Step 2: $\mathtt{mat}_{\mathscr{E}}$ maps $\mathrm{colspace}(\munfold{A}{2})$ into $\mathscr{B}$.}
By Equation~\eqref{eq:A2_row_form}, column $(i,j)$ of $\munfold{A}{2}$ is
$\bc_{ij} = ([\bA_1]_{ij}\sqrt{\eta_1},\dots,[\bA_p]_{ij}\sqrt{\eta_p})^\top$.
Applying $\mathtt{mat}_{\mathscr{E}}$ gives
\[
\mathtt{mat}_{\mathscr{E}}(\bc_{ij}) = \sum_{k=1}^p [\bA_k]_{ij}\sqrt{\eta_k}\,\bE_k.
\]
By Lemma~\ref{lemma:permutation}, the $(\gamma,\delta)$ entry of the block $\bB^{(i,j)}$ of $\bB$
equals $[\bA^{(\gamma,\delta)}]_{ij}$.  Summing over $k$ with the location-tally weights shows
that $\mathtt{mat}_{\mathscr{E}}(\bc_{ij})$ is precisely the matrix $\bB^{(i,j)}$,
which belongs to $B\subset\mathscr{B}$.  Since $\mathrm{colspace}(\munfold{A}{2})$ is spanned by
$\{\bc_{ij}\}$, linearity gives $\mathtt{mat}_{\mathscr{E}}(\mathrm{colspace}(\munfold{A}{2}))\subseteq\mathscr{B}$. 

\textit{Step 3: The maps are mutual inverses.}
A direct substitution verifies
$$\mathtt{mat}_{\mathscr{E}}(\mathtt{vec}_{\mathscr{E}}(\bB_\kappa))=\bB_\kappa \mbox{ and } 
\mathtt{vec}_{\mathscr{E}}(\mathtt{mat}_{\mathscr{E}}(\bv))=\bv,$$
so they are inverse bijections, establishing $\mathscr{B}\cong\mathrm{colspace}(\munfold{A}{2})$.
\end{proof}

Theorem~\ref{thm:dimA=dimB} follows immediately:
$\dim(\mathscr{B}) = \dim(\mathrm{colspace}(\munfold{A}{2})) = r_2 = \dim(\mathscr{A})$.

\section{Structure-Informed and Sparsity-Informed Bounds}
\label{sec:bound_r2}

\subsection{Structural Bounds}
\label{subsec:structural_bounds}

From Theorem~\ref{thm:r2=dimC}, the Kronecker rank $r_2 = \dim(\mathscr{A})$.  When the blocks
of $\bA$ are not known explicitly, one can bound $\dim(\mathscr{A})$ by working with larger
spaces defined by the structural type of the blocks.

Let $\mathscr{S}$ denote the subspace of matrices whose form is determined by the structural type
of the blocks of $\bA$ (e.g., the space of $m\times n$ Toeplitz matrices if each block is
Toeplitz), and let $\mathscr{T}$ denote the analogous space for the outer block structure (i.e.,
the structural type of the blocks of $\bB$).  By construction,
$\mathscr{A}\subseteq\mathscr{S}\subseteq\mathscr{F}$ and $\mathscr{B}\subseteq\mathscr{T}
\subseteq\mathscr{E}$.

\begin{corollary}
\label{cor:nested}
With $\mathscr{S}$ and $\mathscr{T}$ defined as above,
\begin{equation*}
    r_2 \le \min\bigl\{\dim(\mathscr{S}),\,\dim(\mathscr{T})\bigr\}.
\end{equation*}
\end{corollary}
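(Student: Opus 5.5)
The corollary follows by combining the two dimension results of Section~\ref{sec:in_and_out} with the containments recorded just before the statement. I will bound $r_2$ once through the inner blockspan and once through the outer blockspan, and then take the smaller of the two bounds.

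\emph{Bound via $\mathscr{S}$.} Theorem~\ref{thm:r2=dimC} gives $r_2 = \dim(\mathscr{A})$. Each distinct block $\bA_k$ has the structural type that defines $\mathscr{S}$, so every generator of $\mathscr{A}=\mathrm{span}(A)$ lies in the subspace $\mathscr{S}$. Hence $\mathscr{A}\subseteq\mathscr{S}$. Dimension is monotone under inclusion of subspaces of $\RR{m}{n}$, so $r_2=\dim(\mathscr{A})\le\dim(\mathscr{S})$.

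\emph{Bound via $\mathscr{T}$.} Theorem~\ref{thm:dimA=dimB} (or Lemma~\ref{lemma:B_iso_col_A2} with Theorem~\ref{thm:r2=dimC}) gives $r_2=\dim(\mathscr{A})=\dim(\mathscr{B})$. The distinct blocks $\bB_\kappa$ of $\bB$ carry the outer structural type that defines $\mathscr{T}$. Here Lemma~\ref{lemma:permutation} does the work: the pattern of $\bA$'s block layout (Toeplitz, Hankel, banded, and so on) is exactly the internal pattern of each block $\bB^{(\alpha,\beta)}$. Therefore $\mathscr{B}\subseteq\mathscr{T}$, and $r_2=\dim(\mathscr{B})\le\dim(\mathscr{T})$.

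Together the two inequalities give $r_2\le\min\{\dim(\mathscr{S}),\dim(\mathscr{T})\}$.

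The only step needing care is that $\mathscr{S}$ and $\mathscr{T}$ really are linear subspaces that contain the generating blocks. This holds for the pattern-based and sparsity-based structures considered, since each is defined by linear equalities or zero constraints on entries. Once this is in place, the rest is monotonicity of dimension. The further containments $\mathscr{S}\subseteq\mathscr{F}$ and $\mathscr{T}\subseteq\mathscr{E}$ are not needed for the inequality; they only show that the bound is at least as sharp as the cruder bounds $\dim(\mathscr{F})$ and $\dim(\mathscr{E})$ coming from Proposition~\ref{lemma:subspace_containment}.
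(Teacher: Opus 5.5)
Your proposal is correct and matches the paper's argument. The paper also obtains the corollary directly from the containments $\mathscr{A}\subseteq\mathscr{S}$ and $\mathscr{B}\subseteq\mathscr{T}$, which it asserts hold by construction, together with $r_2=\dim(\mathscr{A})=\dim(\mathscr{B})$ from Theorems~\ref{thm:r2=dimC} and~\ref{thm:dimA=dimB}; your use of Lemma~\ref{lemma:permutation} to justify $\mathscr{B}\subseteq\mathscr{T}$ just makes explicit what the paper builds into its definition of $\mathscr{T}$.
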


Figure~\ref{fig:diag_of_bounds} illustrates these containments.

\begin{figure}[t]
    \centering
    \includegraphics[width=0.7\textwidth]{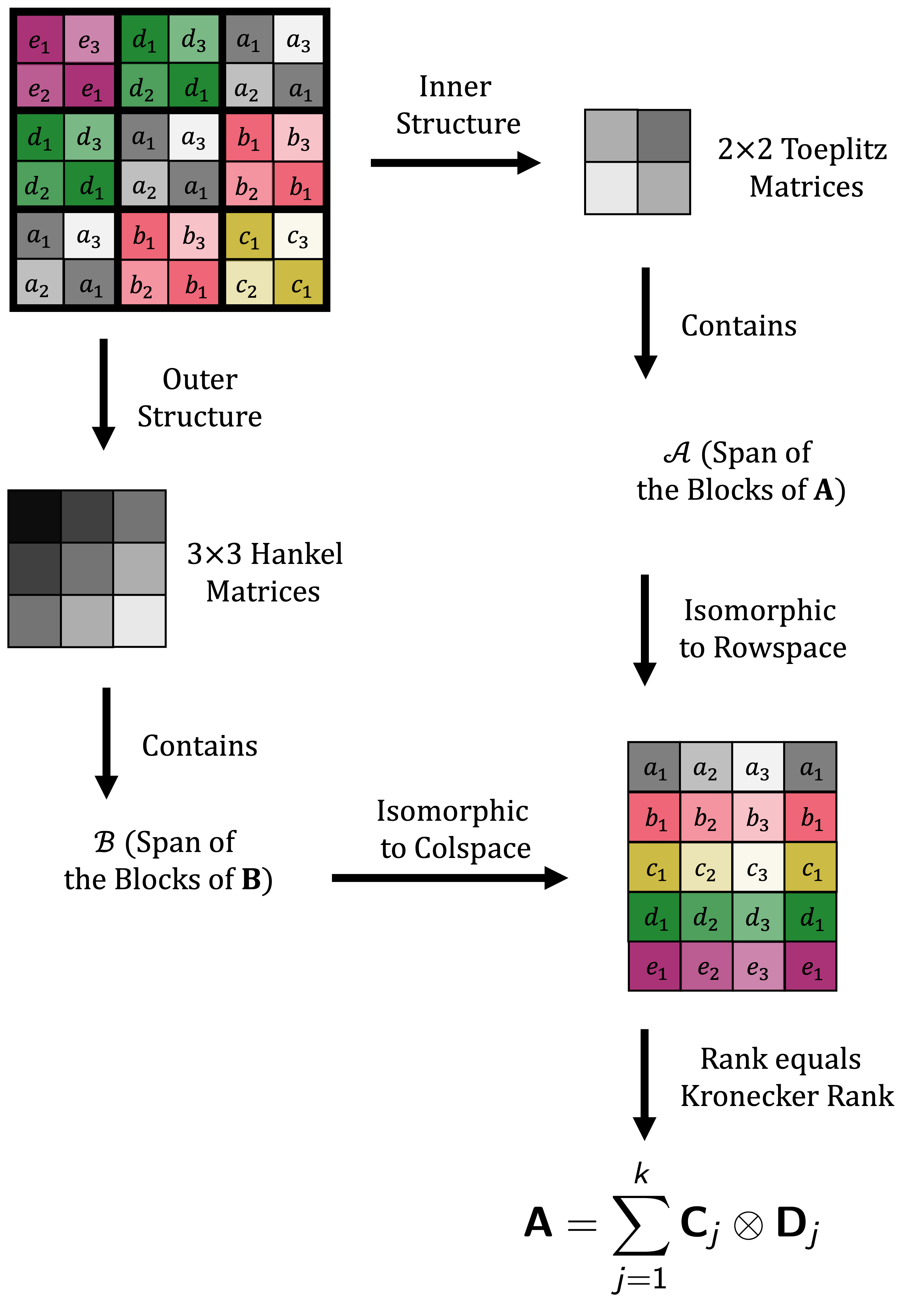}
    \caption{The nested containments $\mathscr{A}\subseteq\mathscr{S}\subseteq\mathscr{F}$ and
    $\mathscr{B}\subseteq\mathscr{T}\subseteq\mathscr{E}$ connecting structural spaces to the
    Kronecker rank.  Dimensions of $\mathscr{S}$ and $\mathscr{T}$ provide computable upper
    bounds on $r_2$.}
    \label{fig:diag_of_bounds}
\end{figure}

\begin{remark}
\label{rem:E_vs_T}
The distinction between $\mathscr{T}$ and $\mathscr{E}$ is important.  The set $E$ is built from
the \emph{distinct} blocks of $\bA$ in the elementwise sense, whereas $\mathscr{T}$ is defined
by linear structure.  For example, if $\bA$ has a symmetric block-Toeplitz-plus-Hankel outer
structure then $\mathscr{T}$ is the space of $q\times q$ symmetric Toeplitz-plus-Hankel matrices
with $\dim(\mathscr{T})=2q-2$, while $\bA$ has $(q^2/4)+(q/2)$ distinct blocks giving
$\dim(\mathscr{E})=(q^2/4)+(q/2)$ (see Appendix~\ref{app:t+h_worked} for the derivation of
this formula in the case $q=5$).  For $q\ge 5$ we have $\mathscr{T}\subsetneq\mathscr{E}$, and
$\dim(\mathscr{T})$ gives a strictly tighter bound.
\end{remark}

\begin{remark}
\label{rem:A_vs_S}
Similarly, $\dim(\mathscr{A})\le\dim(\mathscr{S})$ and the inequality can be strict.  For
instance, if all blocks of $\bA$ are scalar multiples of a single matrix, then
$\dim(\mathscr{A})=1$ regardless of what structural type the blocks possess.
\end{remark}

\paragraph{Table of bounds}
Table~\ref{tab:estimate} records $\dim(\mathscr{T})$ and $\dim(\mathscr{S})$ for three
representative matrix families, for two choices of $q$ (outer block grid size) and $n$ (block
size).  The tighter of the two bounds is highlighted.  All formulas assume that $q$ and $n$ are even.

\begin{table}[t]
    \centering
    \begin{tabular}{c c c c c c c c}
    \cmidrule[1pt]{1-8}
    && \textbf{Inner struct.} & \textbf{Outer struct.}
       && $\dim(\mathscr{T})$ & $\dim(\mathscr{S})$ & $\min$\\
    \cmidrule{1-1}\cmidrule{3-4}\cmidrule{6-8}
    \multirow{3}*{$\begin{matrix}\mathbf{q=32}\\ \mathbf{n=64}\end{matrix}$}
    && Sym.\ T+H     & Nonsym.\ T+H && \textbf{62}  & 252 & 62\\
    && Nonsym.\ T+H  & Sym.\ Toeplitz && 124 & \textbf{64} & 64\\
    && Nonsym.\ T+H  & Tridiagonal   && \textbf{124} & 190 & 124\\
    \cmidrule{1-1}\cmidrule{3-4}\cmidrule{6-8}
    \multirow{3}*{$\begin{matrix}\mathbf{q=64}\\ \mathbf{n=32}\end{matrix}$}
    && Sym.\ T+H     & Nonsym.\ T+H && 126 & \textbf{124} & 124\\
    && Nonsym.\ T+H  & Sym.\ Toeplitz && 252 & \textbf{32} & 32\\
    && Nonsym.\ T+H  & Tridiagonal   && 252 & \textbf{94} & \textbf{94}\\
    \bottomrule
    \end{tabular}
    \caption{Structural upper bounds on $r_2$ for three matrix families and two block-size
    combinations.  ``Sym.\ T+H'' = symmetric Toeplitz plus persymmetric Hankel;
    ``Nonsym.\ T+H'' = Toeplitz-plus-Hankel without symmetry conditions.
    The overall bound is $\min\{\dim(\mathscr{T}),\dim(\mathscr{S})\}$.}
    \label{tab:estimate}
\end{table}

\subsection{Sparsity-Informed Bounds}
\label{subsec:sparsity}

When the blocks of $\bA$ are sparse, the rank of $\munfold{A}{2}$ is bounded by the number of
positions that are nonzero in \emph{at least one} block. This is formalized in the following theorem.

\begin{theorem}
\label{thm:sparse}
Let $\bA\in\RR{(\ell m)}{(qn)}$ be a block-structured matrix. For each distinct block $\bA_k$, define the nonzero index set
$C_k := \{(i,j)\mid[\bA_k]_{ij}\ne 0\}$. Then,
\begin{equation*}
    r_2 \le \mathtt{card}\!\Bigl(\bigcup_{k=1}^p C_k\Bigr).
\end{equation*}
\end{theorem}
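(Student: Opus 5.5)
The plan is to use Theorem~\ref{thm:r2=dimC}, which identifies $r_2$ with $\mathrm{rank}(\munfold{A}{2})$, and bound that rank through the columns of $\munfold{A}{2}$ rather than its rows. Put $C := \bigcup_{k=1}^p C_k$.

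By \eqref{eq:A2_row_form}, the columns of $\munfold{A}{2}\in\RR{p}{mn}$ are indexed by entry positions $(i,j)\in[m]\times[n]$. The column for position $(i,j)$ is
\[
\bc_{ij} = \bigl(\sqrt{\eta_1}[\bA_1]_{ij},\dots,\sqrt{\eta_p}[\bA_p]_{ij}\bigr)^\top ,
\]
which is the same vector used in the proof of Lemma~\ref{lemma:B_iso_col_A2}. If $(i,j)\notin C$, then $[\bA_k]_{ij}=0$ for every $k$, so $\bc_{ij}=\mathbf{0}$.

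It follows that $\mathrm{colspace}(\munfold{A}{2})$ is spanned by $\{\bc_{ij} : (i,j)\in C\}$. The dimension of a span is at most the number of spanning vectors, so
\[
r_2=\mathrm{rank}(\munfold{A}{2}) \le \mathtt{card}(C).
\]
This is the claimed bound.

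The same bound also follows from the row-space description. Every $\bA_k$ lies in the coordinate subspace of $m\times n$ matrices supported on $C$, and that subspace has dimension $\mathtt{card}(C)$. Hence $\dim(\mathscr{A})\le\mathtt{card}(C)$, which can be read as an instance of Corollary~\ref{cor:nested} with $\mathscr{S}$ taken to be this coordinate subspace.

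The argument has no real obstacle. The only point needing care is the indexing: a column of $\munfold{A}{2}$ must be matched to a fixed entry position $(i,j)$ that is common to all the distinct blocks. This is exactly the identification \eqref{eq:A2_row_form} with $\ba_k = \mathrm{vec}(\sqrt{\eta_k}\bA_k)$, and I would invoke it explicitly. I would also note that the scaling by $\sqrt{\eta_k}$ does not change which entries are zero.
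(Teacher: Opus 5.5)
Your argument is correct and is essentially the paper's proof: the column of $\munfold{A}{2}$ indexed by $(i,j)$ vanishes whenever $(i,j)\notin\bigcup_k C_k$, so $\munfold{A}{2}$ has at most $\mathtt{card}(\bigcup_k C_k)$ nonzero columns and hence rank at most that number. Your supplementary row-space argument is also valid, with one small correction: $r_2=\mathrm{rank}(\munfold{A}{2})$ holds by the definition of multirank, and Theorem~\ref{thm:r2=dimC} is what supplies $r_2=\dim(\mathscr{A})$, which is the fact that second route needs.
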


\begin{proof}
Let $c = \mathtt{card}(\bigcup_k C_k)$.  By Equation~\eqref{eq:A2_row_form}, each row of $\munfold{A}{2}$
is $\ba_k = \mathrm{vec}(\sqrt{\eta_k}\bA_k)$.  A column of $\munfold{A}{2}$, indexed by position
$(i,j)$, is zero if $[\bA_k]_{ij}=0$ for all $k$, i.e., if $(i,j)\notin\bigcup_k C_k$.  Hence
$\munfold{A}{2}$ has at most $c$ nonzero columns, giving $r_2 \le c$.
\end{proof}

When all blocks share the same sparsity pattern, $\bigcup_k C_k = C_k$ for any $k$, so
$r_2\le\mathtt{nnz}(\bA_k)$.  For banded blocks, the following corollary applies immediately.

\begin{corollary}
\label{cor:banded}
Let each block $\bA_k$ have upper bandwidth $u_k$ and lower bandwidth $\ell_k$, and set
$b_u = \max_k u_k$, $b_\ell = \max_k\ell_k$.  Then,
\begin{equation*}
    r_2 \le n + \sum_{i=1}^{b_u}(n-i) + \sum_{j=1}^{b_\ell}(n-j).
\end{equation*}
\end{corollary}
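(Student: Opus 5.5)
The plan is to derive Corollary~\ref{cor:banded} directly from Theorem~\ref{thm:sparse} by counting. I will bound the union $\bigcup_{k=1}^p C_k$ by the set of all positions inside a single common band, and then count the entries of that band diagonal by diagonal. The bound as written uses $n$ for every diagonal length, so I will take the blocks to be square ($m=n$), which is the setting the formula implicitly assumes. For rectangular blocks the same argument goes through with diagonal lengths $\min(m,n)$ adjusted.

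First, I will make the bandwidth hypothesis precise. Block $\bA_k$ has upper bandwidth $u_k$ and lower bandwidth $\ell_k$ means $[\bA_k]_{ij}=0$ whenever $j-i>u_k$ or $i-j>\ell_k$. Hence
\begin{equation*}
C_k\subseteq\{(i,j)\in[n]\times[n] : -\ell_k\le j-i\le u_k\}.
\end{equation*}
Since $u_k\le b_u$ and $\ell_k\le b_\ell$ for every $k$, each $C_k$ lies in the common band
\begin{equation*}
\Omega:=\{(i,j)\in[n]\times[n] : -b_\ell\le j-i\le b_u\}.
\end{equation*}
It follows that $\bigcup_k C_k\subseteq\Omega$, and Theorem~\ref{thm:sparse} gives $r_2\le\mathtt{card}(\Omega)$.

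Second, I will count $\mathtt{card}(\Omega)$ by partitioning it into diagonals $\{j-i=d\}$ for $d=-b_\ell,\dots,b_u$. In an $n\times n$ array, the diagonal $j-i=d$ with $|d|<n$ contains exactly $n-|d|$ positions. The main diagonal contributes $n$, the superdiagonals $d=1,\dots,b_u$ contribute $\sum_{i=1}^{b_u}(n-i)$, and the subdiagonals $d=-1,\dots,-b_\ell$ contribute $\sum_{j=1}^{b_\ell}(n-j)$. Adding these gives exactly the stated bound.

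The only delicate point is this counting step. It needs $b_u,b_\ell\le n-1$ so that each diagonal is nonempty and has length $n-|d|$. This holds automatically, because bandwidths of $n\times n$ matrices never exceed $n-1$: any larger value can be replaced by $n-1$ without changing the zero pattern, and doing so only makes the bound smaller. No rank argument is needed beyond Theorem~\ref{thm:sparse}, since it already reduces $r_2$ to the number of possibly nonzero positions.
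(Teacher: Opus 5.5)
Your proposal is correct and is the argument the paper intends: the paper gives no separate proof, saying only that the corollary ``applies immediately'' from Theorem~\ref{thm:sparse}, and your containment of $\bigcup_k C_k$ in the common band, counted diagonal by diagonal, supplies exactly that step (you are also right that the formula implicitly assumes square blocks, $m=n$). One small slip: replacing a bandwidth larger than $n-1$ by $n-1$ drops terms $n-i\le 0$, so it makes the formula no smaller rather than smaller; the bound holds because the standard minimal bandwidths already satisfy $b_u,b_\ell\le n-1$, not because of that truncation.
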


We can also establish a relationship between the sparsity within the blocks of $\bA$ and the block-sparsity of the permuted matrix $\bB$. Here, block-sparsity refers to the ratio of nonzero blocks to the total number of blocks. To do so, we present the following theorem.

\begin{theorem}
\label{thm:block_sparse}
   Let $\bA$ be a block-structured matrix and let $\bB$ be the matrix permuted from $\bA$ such that the inner and outer structures have been swapped. Define the sets $C_k$ as above and similarly define the sets $D_\kappa$ corresponding to the blocks of $\bB$, i.e., $D_\kappa := \{(i,j)\mid[\bB_\kappa]_{ij}\ne 0\}$.   Finally, let $\eta_k$ and $\xi_\kappa$ be the number of times that block $\bA_k$ and $\bB_\kappa$ repeat in $\bA$ and $\bB$, respectively.  Then, 
\begin{equation*}   \mathtt{card}\!\Bigl(\bigcup_{k=1}^p C_k\Bigr) = \sum_{\kappa=1}^\rho\xi_\kappa,
    \qquad
    \mathtt{card}\!\Bigl(\bigcup_{\kappa=1}^\rho D_\kappa\Bigr) = \sum_{k=1}^p\eta_k.
\end{equation*}
\end{theorem}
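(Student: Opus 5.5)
The plan is a double-counting argument that uses Lemma~\ref{lemma:permutation} to match entry positions inside the blocks of $\bA$ with block positions of $\bB$. One convention has to be fixed first, and I will state it explicitly: as in the framework of~\cite{kilmer2022matrixtensordecomp}, the zero block is not counted among the distinct blocks. Thus $\{\bB_\kappa\}_{\kappa=1}^\rho$ are the distinct \emph{nonzero} blocks of $\bB$, and $\sum_\kappa \xi_\kappa$ is the number of block positions $(\alpha,\beta)\in[m]\times[n]$ at which $\bB$ has a nonzero block. The same convention applies to $\bA$. Without it, the right-hand side would just be $mn$ (or $\ell q$), and the identity would fail in general.

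The key step is the following equivalence, for a fixed position $(\alpha,\beta)\in[m]\times[n]$:
\[
(\alpha,\beta)\in\bigcup_{k=1}^p C_k
\iff \exists\, k:\ [\bA_k]_{\alpha\beta}\neq 0
\iff \exists\,(\gamma,\delta):\ \bA^{(\gamma,\delta)}_{\alpha\beta}\neq 0
\iff \bB^{(\alpha,\beta)}\neq \mathbf{0}.
\]
The second equivalence holds because every block $\bA^{(\gamma,\delta)}$ equals some $\bA_k$, and every $\bA_k$ occurs somewhere. If the zero block is excluded from the list, this step is still harmless, since the zero block contributes no nonzero entries. The third equivalence is Lemma~\ref{lemma:permutation}: $\bB^{(\alpha,\beta)}_{\gamma\delta}=\bA^{(\gamma,\delta)}_{\alpha\beta}$ for all $(\gamma,\delta)$.

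With the equivalence in hand, $\bigcup_k C_k$ is in bijection with the set of block positions of $\bB$ that hold a nonzero block. Partitioning these positions according to which distinct nonzero block $\bB_\kappa$ sits there gives exactly $\xi_\kappa$ positions for each $\kappa$. Summing over $\kappa$ yields $\mathtt{card}(\bigcup_k C_k)=\sum_{\kappa=1}^\rho\xi_\kappa$.

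The second identity follows by the symmetric argument with the roles of $\bA$ and $\bB$ exchanged. By Lemma~\ref{lemma:permutation}, $\bA$ is obtained from $\bB$ by the same kind of stride permutation, with $\bA^{(\gamma,\delta)}_{\alpha\beta}=\bB^{(\alpha,\beta)}_{\gamma\delta}$. Hence $(\gamma,\delta)\in\bigcup_\kappa D_\kappa$ if and only if $\bA^{(\gamma,\delta)}\neq\mathbf{0}$, and grouping by distinct nonzero blocks gives $\sum_k\eta_k$.

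The only real obstacle is the bookkeeping convention about the zero block. The combinatorics are immediate once that is settled, so in writing the proof I would state the convention explicitly before the argument.
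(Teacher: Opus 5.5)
Your proposal is correct and follows essentially the same route as the paper. Both use Lemma~\ref{lemma:permutation} to identify $\bigcup_k C_k$ with the block positions of $\bB$ that hold a nonzero block, partition those positions by which distinct block $\bB_\kappa$ sits there (so each class has $\xi_\kappa$ elements), and get the second identity by exchanging the roles of $\bA$ and $\bB$. Your explicit convention that the zero block is excluded from the distinct blocks is the same assumption the paper uses implicitly in its Step~1, where it writes $\bB^{(\alpha,\beta)}=\bzero\notin B$, so stating it up front is a useful clarification rather than a departure.
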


\begin{proof}
We prove the first equality; the second follows by applying the same argument to $\bB$.  Define
$$\overline{\bigcup_{k=1}^{p} C_k} = \{(i,j)\in [m]\times [n]\mid[\bA_k]_{ij}=0\;\mbox{for all } k\}.$$  For each $\kappa$, let
$\{(\alpha,\beta)\}_\kappa = \{(\alpha,\beta)\mid\bB^{(\alpha,\beta)}=\bB_\kappa\}$, so
$\mathtt{card}(\{(\alpha,\beta)\}_\kappa) = \xi_\kappa$.

\emph{Step 1.}  If $(\alpha,\beta)\in\overline{\bigcup_k C_k}$, then
$[\bB^{(\alpha,\beta)}]_{\gamma\delta} = [\bA^{(\gamma,\delta)}]_{\alpha\beta} = 0$ for all
$(\gamma,\delta)$, so $\bB^{(\alpha,\beta)}=\bzero\notin B$.

\emph{Step 2.}  We claim the sets $\{(\alpha,\beta)\}_\kappa$ partition $\bigcup_k C_k$.
\emph{Coverage:} if $(\alpha,\beta)\in\bigcup_k C_k$, then some entry of $\bB^{(\alpha,\beta)}$
is nonzero (by Lemma~\ref{lemma:permutation}), so $\bB^{(\alpha,\beta)}\in B$ and
$(\alpha,\beta)\in\{(\alpha,\beta)\}_\kappa$ for some $\kappa$.  For the reverse, let $(\alpha,\beta)\in\{(\alpha,\beta)\}_{\kappa}$ for an arbitrary choice of $\kappa$. Suppose that $(\alpha,\beta)\in\overline{\bigcup_k C_{k}}$. Since $(\alpha,\beta)\in\{(\alpha,\beta)\}_{\kappa}$, there exists some $\kappa\in[\rho]$ such that $\bB^{(\alpha,\beta)} = \bB_{\kappa}$. However, this contradicts Step~1. Thus, $(\alpha,\beta)$ must not be in the set $\overline{\bigcup_k C_{k}}$ and then must be in the set $\bigcup_k C_{k}$. \emph{Disjointness:} if $(\alpha,\beta)\in\{(\alpha,\beta)\}_{\kappa_1}\cap
\{(\alpha,\beta)\}_{\kappa_2}$ then $\bB_{\kappa_1}=\bB^{(\alpha,\beta)}=\bB_{\kappa_2}$,
contradicting distinctness for $\kappa_1\ne\kappa_2$.

Hence $\mathtt{card}(\bigcup_k C_k) = \sum_\kappa\mathtt{card}(\{(\alpha,\beta)\}_\kappa)
= \sum_\kappa\xi_\kappa$.
\end{proof}

\section{Numerical Examples}
\label{sec:num_results}

\subsection{BTHTHB Matrices: Verifying the Theory}
\label{sec:t_plus_h}

Matrices with \linebreak Block-Toeplitz-plus-Hankel structure with Toeplitz-plus-Hankel blocks (BTHTHB) arise in image
deblurring~\cite{hansen2006deblur,kilmer2007tplush,nagy2004reflexivekron}.  We verify
Corollary~\ref{cor:nested} by comparing the theoretical bound $r_{th}$ (derived from structure)
with the numerically computed rank $r_{calc}=\mathrm{rank}(\munfold{A}{2})$ for four BTHTHB
structures.

Let $\bA\in\RR{n^2}{n^2}$ be BTHTHB with $n\times n$ blocks and \emph{random entries}, so that
any dependence among the blocks of $\bA$ is attributable to structure alone.  Write
$\bA = \bA_T+\bA_H$ where $\bA_T$ is block-Toeplitz and $\bA_H$ is block-Hankel.  Letting
$\nu\sim\mathcal{N}(0,1)$ independently, we construct
\begin{equation*}
    \bA = \Bigl(\sum_i\tilde{\bE}_{T_i}\otimes\sum_j\nu_j\tilde{\bE}_{T_j}\Bigr)
         +\Bigl(\sum_k\tilde{\bE}_{H_k}\otimes\sum_\ell\nu_\ell\tilde{\bE}_{H_\ell}\Bigr),
\end{equation*}
where $\tilde{\bE}_{(\cdot)}$ are the unscaled location-tally matrices.  The four structures
tested (illustrated in Figure~\ref{fig:t_plus_h_fig}) are:
\begin{itemize}
    \item \emph{Symmetric Full:} $\bA_T$ full and block-symmetric and $\bA_H$ full and
          \linebreak block-persymmetric.
    \item \emph{Nonsymmetric Full:} same, without symmetry or persymmetry.
    \item \emph{Symmetric Banded:} $\bA_T$ block-banded with upper bandwidth $n/2-1$, lower
          bandwidth $n/2$ (nearly symmetric); $\bA_H$ analogously banded (nearly persymmetric).
    \item \emph{Nonsymmetric Banded:} as above, without the symmetry conditions.
\end{itemize}

The results are shown in Table~\ref{tab:t+h_table}. In the center columns, we compare $r_{th}$ to $p$ and $r_{calc}$, the rank of $\munfold{A}{2}$ as calculated by the MATLAB $\mathtt{rank}$ command. Agreement $r_{th}=r_{calc}$ holds in all cases, confirming that our structural bounds are exact. The relative error between $\bA$ and the reconstruction $\tilde{\bA}$ using only $r_{th}$ Kronecker terms was on the order of $10^{-15}$ in all cases.

\begin{figure}[t]
    \centering
    \includegraphics[width=\textwidth]{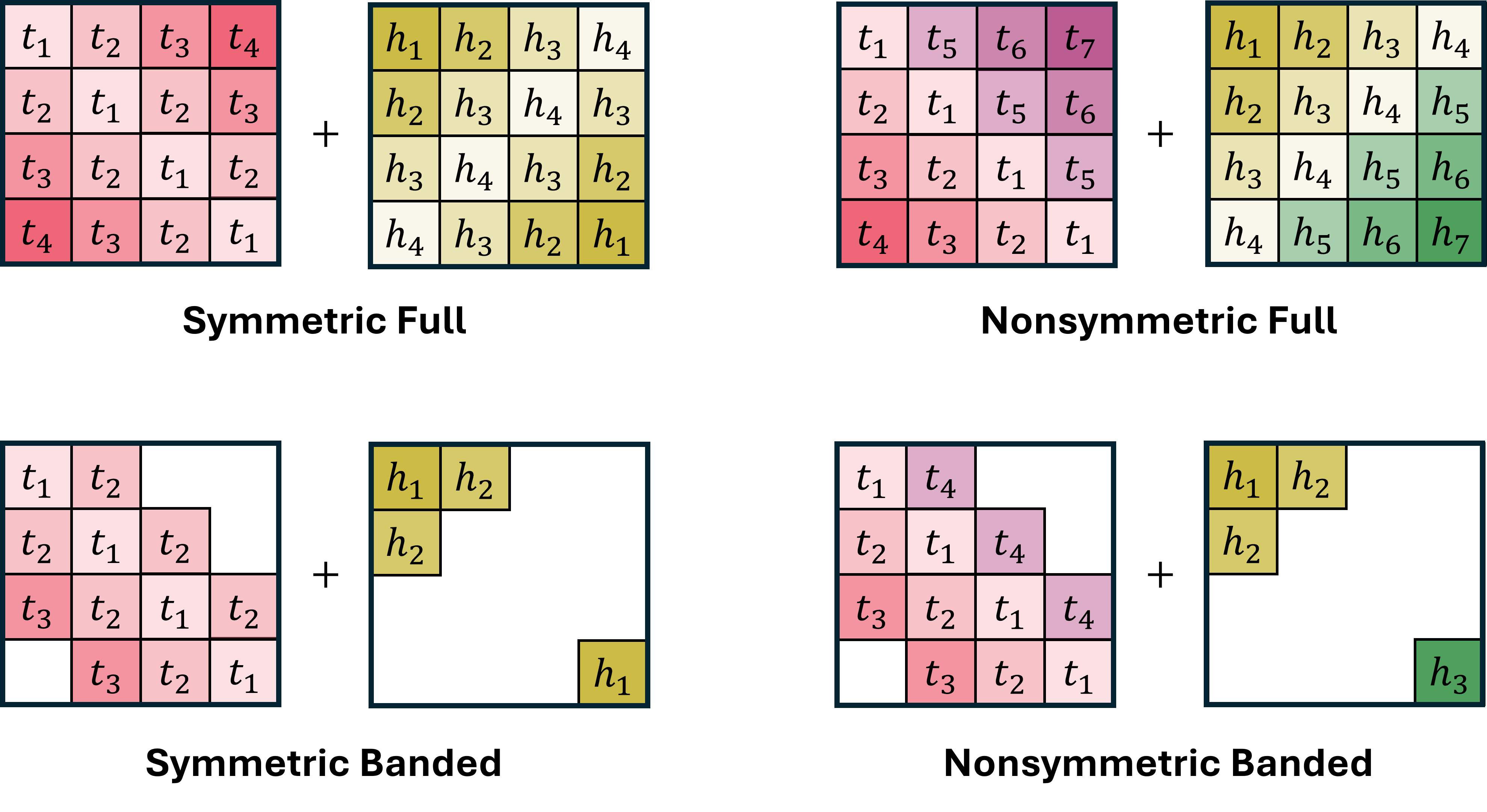}
    \caption{The four BTHTHB structures for $n=4$.}
    \label{fig:t_plus_h_fig}
\end{figure}

\begin{table}[t]
    \centering
    \begin{tabular}{c c c c c c c}
    \cmidrule[1pt]{1-7}
    \textbf{Structure} && $n$ && $p$ & $r_{th}$ & $r_{calc}$\\
    \cmidrule{1-1}\cmidrule{3-3}\cmidrule{5-7}
    \multirow{4}*{\begin{tabular}{c}\textbf{Symmetric Full}\\$p=\frac{n^2}{4}+\frac{n}{2}$\\
                  $r_{th}=2n-2$\end{tabular}}
    &&\textbf{8}&&20&14&14\\&&\textbf{16}&&72&30&30\\&&\textbf{32}&&272&62&62\\&&\textbf{64}&&1056&126&126\\
    \cmidrule{1-1}\cmidrule{3-3}\cmidrule{5-7}
    \multirow{4}*{\begin{tabular}{c}\textbf{Nonsymmetric Full}\\$p=n^2$\\
                  $r_{th}=4n-4$\end{tabular}}
    &&\textbf{8}&&64&28&28\\&&\textbf{16}&&256&60&60\\&&\textbf{32}&&1024&124&124\\&&\textbf{64}&&4096&252&252\\
    \cmidrule{1-1}\cmidrule{3-3}\cmidrule{5-7}
    \multirow{4}*{\begin{tabular}{c}\textbf{Symmetric Banded}\\$p=\frac{n^2}{16}+\frac{3n}{4}+1$\\
                  $r_{th}=n+1$\end{tabular}}
    &&\textbf{8}&&11&9&9\\&&\textbf{16}&&29&17&17\\&&\textbf{32}&&89&33&33\\&&\textbf{64}&&305&65&65\\
    \cmidrule{1-1}\cmidrule{3-3}\cmidrule{5-7}
    \multirow{4}*{\begin{tabular}{c}\textbf{Nonsymmetric Banded}\\$p=\frac{n^2}{4}+n$\\
                  $r_{th}=2n-1$\end{tabular}}
    &&\textbf{8}&&24&15&15\\&&\textbf{16}&&80&31&31\\&&\textbf{32}&&288&63&63\\&&\textbf{64}&&1088&127&127\\
    \bottomrule
    \end{tabular}
    \caption{Comparison of structural bound $r_{th}$ and numerical rank $r_{calc}$ for four
    BTHTHB structures.  Each $\bA\in\RR{n^2}{n^2}$ has $\tens{A}\in\RRR{n}{p}{n}$ and
    $\munfold{A}{2}\in\RR{p}{n^2}$.  The equality $r_{th}=r_{calc}$ holds in all cases.}
    \label{tab:t+h_table}
\end{table}

\subsection{Sparsity Bounds}
\label{sec:sparse_ex}

We now illustrate Theorem~\ref{thm:sparse} for small examples with $q=4$ and $n=8$.

\paragraph{Shared sparsity pattern}
Let $\bA\in\RR{n^2}{n^2}$ be a block-dense matrix whose blocks all share the same sparsity
pattern at level $\tau=0.3$, with entries drawn independently from $\mathcal{N}(0,1)$.  Since
the blocks share a single pattern, $\bigcup_k C_k = C_1$ and the sparsity bound gives
$c = \lfloor\tau n^2\rfloor = 19$.  Because there is no repeated outer block structure, the outer
block space is the full space of $q\times q$ matrices, giving $\dim(\mathscr{B}) = q^2 = 16$ as
a crude structural upper bound.  Here, the structural bound is tighter: $r_2 \le \min\{19,16\}=16$.

For $\tau = 0.2$ the sparsity bound becomes $c = \lfloor 0.2\cdot 64\rfloor = 12$, while the
structural bound remains $q^2=16$.  Now sparsity governs: $r_2\le 12$.

\paragraph{Differing sparsity patterns}
Finally, suppose each block has a distinct sparsity pattern, but each has exactly 7 nonzero
entries.  The union $\bigcup_k C_k$ is then precisely the set of positions that are nonzero in
at least one block; if the union has the same cardinality as in the $\tau=0.2$ case above, the
bound $r_2\le 12$ applies equally.

Figure~\ref{fig:sparse} shows spy plots of $\bA$ (left column) and $\munfold{A}{2}$ (right
column) for each case, confirming the predicted number of nonzero columns.

\begin{figure}[t!]
    \centering
    \begin{subfigure}{0.43\linewidth}
        \centering
        \hspace{-2cm}
        \includegraphics[width = 1.2\textwidth]{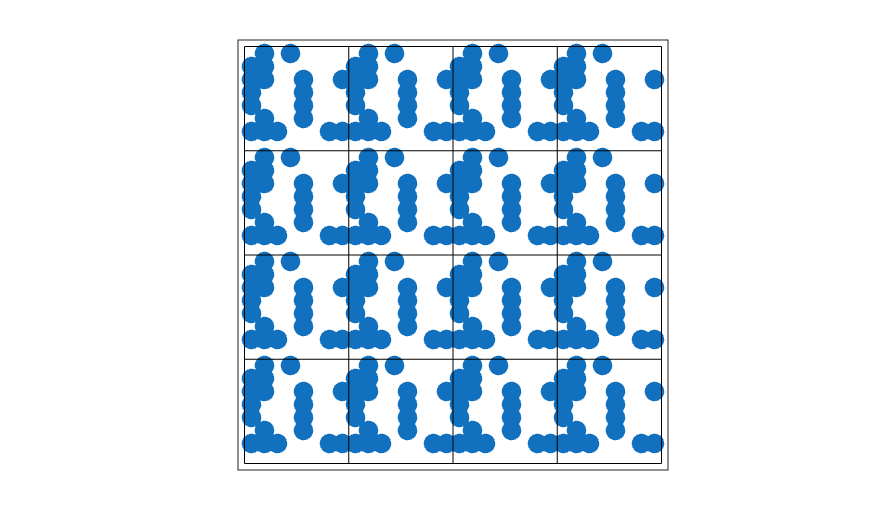}
    \end{subfigure}
    \begin{subfigure}{0.43\linewidth}
        \centering
        \hspace{-2cm}
        \includegraphics[width = 1.2\textwidth]{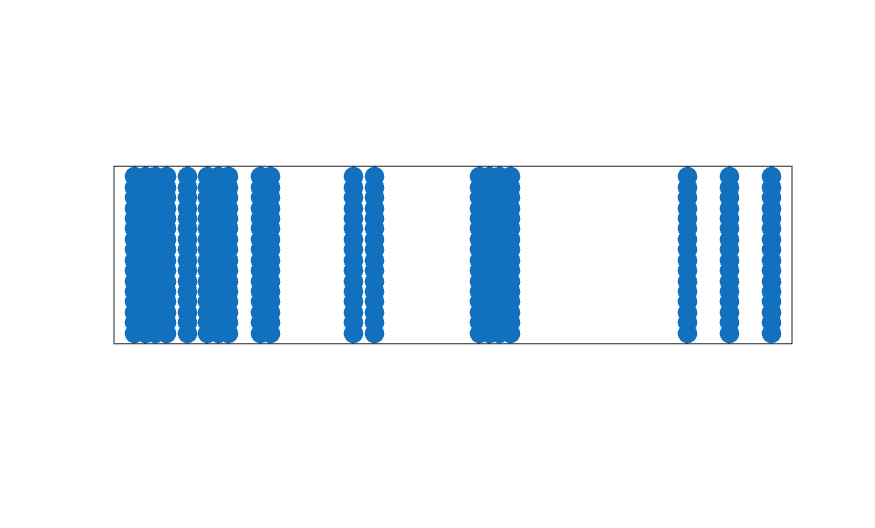}
    \end{subfigure}
    \begin{subfigure}{0.43\linewidth}
        \centering
        \hspace{-2cm}
        \includegraphics[width = 1.2\textwidth]{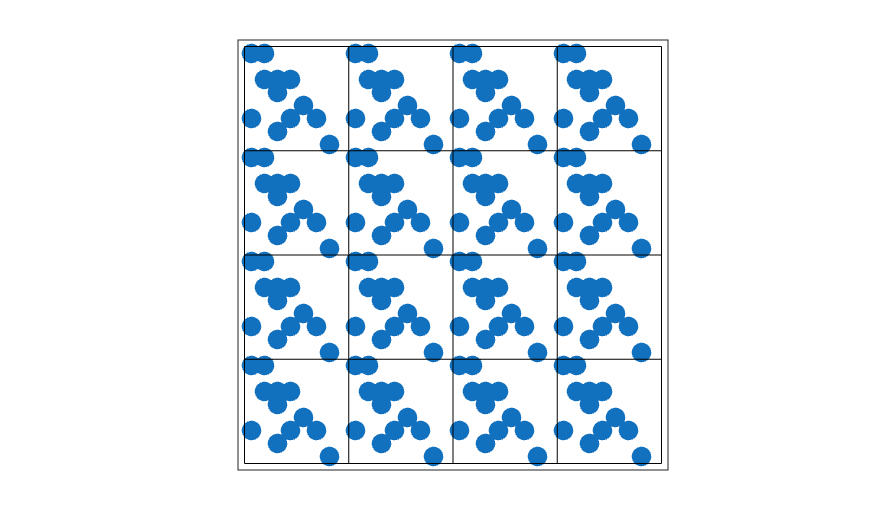}
    \end{subfigure}
    \begin{subfigure}{0.43\linewidth}
        \centering
        \hspace{-2cm}
        \includegraphics[width = 1.2\textwidth]{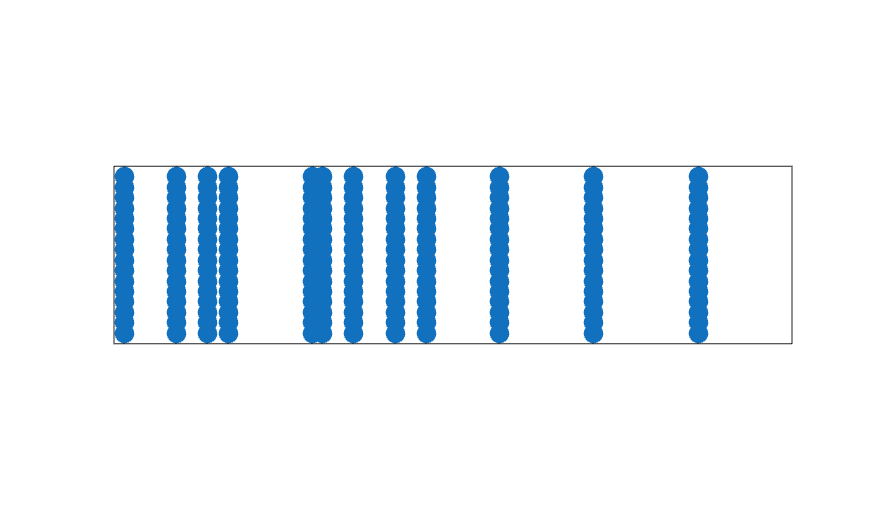}
    \end{subfigure}
    \begin{subfigure}{0.43\linewidth}
        \centering
        \hspace{-2cm}
        \includegraphics[width = 1.2\textwidth]{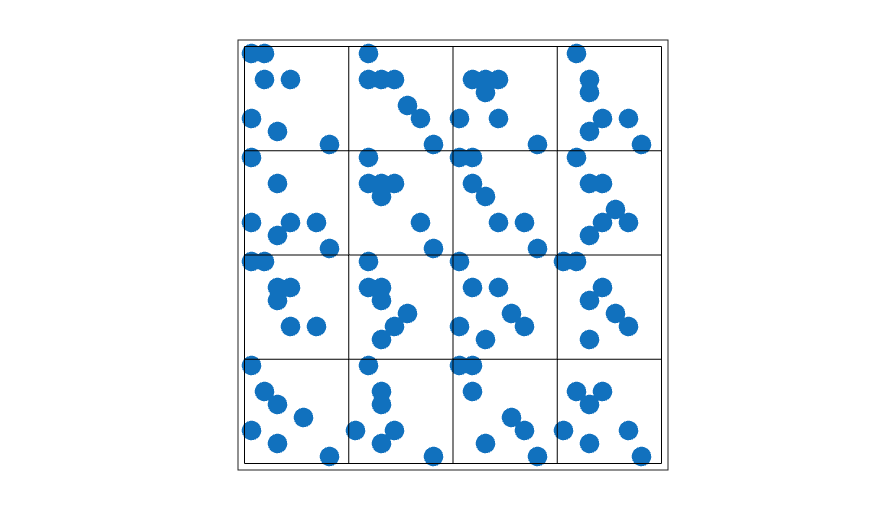}
    \end{subfigure}
    \begin{subfigure}{0.43\linewidth}
        \centering
        \hspace{-2cm}
        \includegraphics[width = 1.2\textwidth]{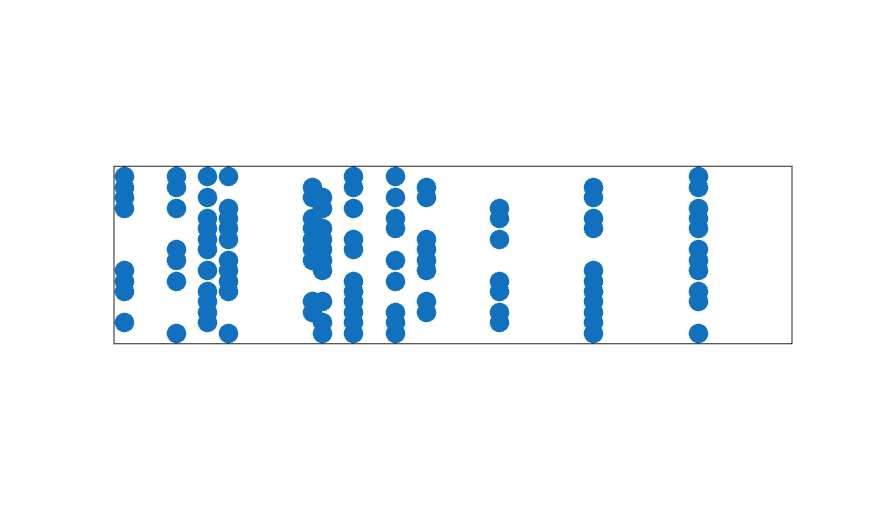}
    \end{subfigure}
    \caption{Left column: spy plots of $\bA$. Right column: corresponding spy plots of
    $\munfold{A}{2}$, confirming the predicted number of nonzero columns in each case.
    Top row ($\tau=0.3$, shared pattern): the structural bound $q^2=16$ is tighter than the
    sparsity bound $c=19$.  Middle row ($\tau=0.2$, shared pattern): the sparsity bound
    $c=12$ governs.  Bottom row (differing patterns, union of cardinality 12): the sparsity
    bound again gives $r_2\le 12$.}
    \label{fig:sparse}
\end{figure}

\subsection{Explaining Singular Value Decay in SuiteSparse Matrices}
\label{sec:suitsparse}

We now revisit two examples from~\cite{kilmer2022matrixtensordecomp} in which the authors demonstrate compression of sparse matrices from the SuiteSparse collection~\cite{davis2011suite}. While that work attributes the compression to rapid decay of the singular values of $\munfold{A}{2}$, it does not explain \emph{why} the decay occurs. Our theory provides that explanation.

\paragraph{Matrix \texttt{t2d\_q4}}
This finite-difference matrix for nonlinear diffusion is sparse and block-tridiagonal. It is block $99\times 99$, with each block also of size $99\times 99$. The associated tensor satisfies $\tens{A}\in\RRR{99}{295}{99}$; a naive mapping would yield 295 Kronecker terms. Kilmer and Saibaba showed that compressing the second mode to rank 5 caused negligible error.

The matrix $\bA$ has the form
\begin{equation*}
    \bA = \left[\begin{matrix}
        \bA_{d_1} & \bA_{o_3} & &\\
        \bA_{o_2} & \bA_{d_4} & \bA_{o_6} &\\
        & \ddots & \ddots &  \ddots &\\
        & & \bA_{o_{290}} & \bA_{d_{292}} & \bA_{o_{294}} \\
        & & & \bA_{o_{293}} & \bA_y
    \end{matrix}\right],
\end{equation*}
where
\begin{equation*}
    \bA_{d_k} = \left[\begin{matrix}
        8/3 & -1/3 & \\
        -1/3 & \ddots & \ddots \\
        & \ddots & 8/3 &  -1/3 \\
        & & -1/3 & x_k 
    \end{matrix}\right],
    \qquad
    \bA_{o_k} = \left[\begin{matrix}
        -1/3 & -1/3 & \\
        -1/3 & \ddots & \ddots \\
        & \ddots & -1/3 &  -1/3 \\
        & & -1/3 & x_k 
    \end{matrix}\right],
\end{equation*}
and $\bA_y$ has tridiagonal sparsity with entries $y_1,\dots,y_{295}$ that need not follow the
$8/3$, $-1/3$ pattern.

We now identify an explicit basis for the subspace containing all blocks of $\bA$.  Define the
following three $99\times 99$ matrices:
\begin{itemize}
    \item $\bPhi_d$: the tridiagonal matrix with $8/3$ on the superdiagonal and subdiagonal and
          on the first 98 diagonal entries, and $0$ at position $(99,99)$;
    \item $\bPhi_o$: the tridiagonal matrix with $-1/3$ everywhere on the super- and
          subdiagonals and on the first 98 diagonal entries, and $0$ at position $(99,99)$;
    \item $\bPhi_c$: the matrix that is $0$ everywhere except for the value $1$ at position
          $(99,99)$.
\end{itemize}
Every diagonal block satisfies $\bA_{d_k} = \bPhi_d + x_k\bPhi_c$, and every off-diagonal block
satisfies $\bA_{o_k} = \bPhi_o + x_k\bPhi_c$.  Thus $\{\bPhi_d, \bPhi_o, \bPhi_c\}$ spans all
blocks of $\bA$ except possibly $\bA_y$.  These three matrices are linearly independent: $\bPhi_c$ is supported only
at position $(99,99)$, while $\bPhi_d$ and $\bPhi_o$ are zero there and differ on the interior
diagonal entries ($8/3$ vs.\ $-1/3$), so no nontrivial linear combination can vanish.

If $\bA_y \in \mathrm{span}\{\bPhi_d,\bPhi_o,\bPhi_c\}$, then all blocks lie in a
three-dimensional subspace and $r_2 \le 3$.  If $\bA_y \notin
\mathrm{span}\{\bPhi_d,\bPhi_o,\bPhi_c\}$, it contributes exactly one additional dimension.
In either case,
\begin{equation*}
    \mathscr{A} \subseteq \mathrm{span}\{\bPhi_d,\,\bPhi_o,\,\bPhi_c,\,\bA_y\},
\end{equation*}
giving $\dim(\mathscr{A})\le 4$, and by Theorem~\ref{thm:r2=dimC}, $r_2\le 4$.

Figure~\ref{fig:svds} (left) plots the first 20 singular values of $\munfold{A}{2}$, confirming that the numerical rank is exactly 4.  Our structural analysis thus provides a tight bound and fully explains the rapid singular value decay reported in~\cite{kilmer2022matrixtensordecomp}.

\paragraph{Matrix \texttt{fv2}}
We now consider the matrix $\mathtt{fv2}$ from the SuiteSparse collection, which has the form

\begin{equation*}
    \bA = \left[\begin{matrix}
        \bA_d & \bA_o & &\\
        \bA_o & \bA_d & \bA_o &\\
        & \ddots & \ddots &  \ddots &\\
        & & \bA_o & \bA_d & \bA_o \\
        & & & \bA_o & \bA_d
    \end{matrix}\right],
\end{equation*}
where the only two distinct blocks $\bA_d$ and $\bA_o$ are
\begin{small}
\begin{equation*}
    \bA_d = \left[\begin{matrix}
        3.5101 & -0.5 & \\
        -0.5 & \ddots & \ddots \\
        & \ddots & \ddots & -0.5  \\
        & & -0.5 & 3.5101 
    \end{matrix}\right]
    \quad\mbox{and} \quad
    \bA_o = \left[\begin{matrix}
        0.5 & -0.25 & \\
        -0.25 & \ddots & \ddots \\
        & \ddots & \ddots & -0.25  \\
        & & -0.25 & 0.5 
    \end{matrix}\right].
\end{equation*}
\end{small}
This matrix is block-symmetric and block-tridiagonal, with symmetric tridiagonal blocks whose diagonals are constant. The outer structure is therefore a symmetric Toeplitz tridiagonal pattern of dimension 2, and the inner structure has the same dimension. Hence $r_2 = \dim(\mathscr{A}) = 2$, and $\bA$ can be written exactly as a sum of two Kronecker products. Figure~\ref{fig:svds} (right) confirms that $\munfold{A}{2}$ has exactly two nonzero singular values.

\begin{figure}[t]
        \centering
        \begin{subfigure}{0.45\linewidth}
        \centering
            \includegraphics[width = 0.95\textwidth]{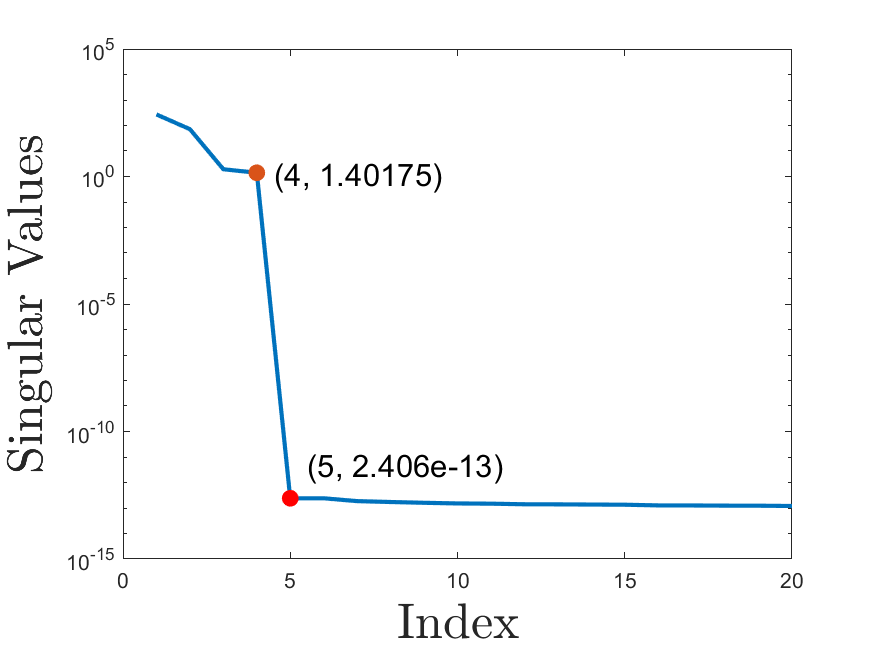}
        \label{fig:sv_t2d_q4}
        \end{subfigure}
        \begin{subfigure}{0.45\linewidth}
        \centering
            \includegraphics[width = 0.95\textwidth]{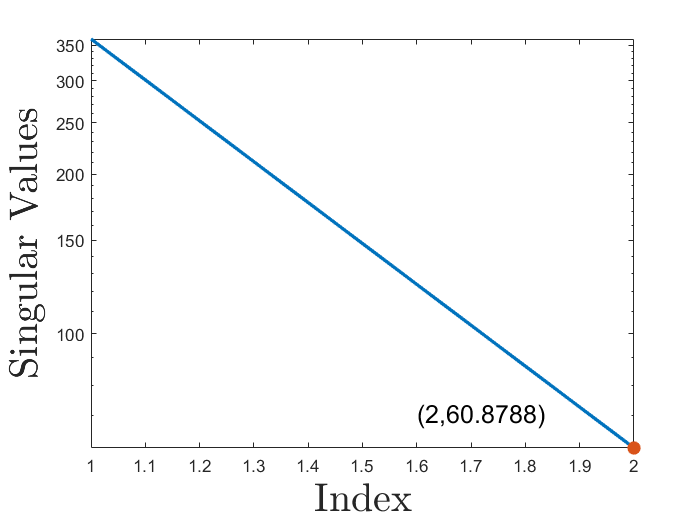}
        \label{fig:sv_fv2}
        \end{subfigure}
        \caption{Left: First 20 singular values of $\munfold{A}{2}$ for \texttt{t2d\_q4}. Right: Singular values of $\munfold{A}{2}$ for \texttt{fv2}.}
        \label{fig:svds}
    \end{figure}

\section{Conclusion}
\label{sec:conclusion}

We have derived a complete characterization of the Kronecker rank of a block-structured matrix in
terms of its underlying structure.  The central result is the chain
$r_2 = \dim(\mathscr{A}) = \dim(\mathscr{B})$, proved both by a direct dimension argument and via
an explicit isomorphism between the outer blockspan and the column space of the second-mode
unfolding.  The nested containments $\mathscr{A}\subseteq\mathscr{S}\subseteq\mathscr{F}$ and
$\mathscr{B}\subseteq\mathscr{T}\subseteq\mathscr{E}$ translate structural and sparsity
information into computable upper bounds on $r_2$.  For two sparse matrices from the SuiteSparse
collection, these bounds are tight and provide a structural explanation for singular value decay
that had previously been observed but not understood.

Several directions for future work remain open.  First, extending the isomorphism proof to
matrices with \emph{multiple levels} of nested block structure would generalize the framework to
higher-order Tucker decompositions.  Second, the preliminary analysis of modes~1 and~3 of the
associated tensor, which can exhibit rank deficiency when the blocks of $\bA$ are themselves
low-rank, warrants further investigation; a full characterization of this case could enable
additional compression beyond the second mode.  Third, a systematic study of the conditions under
which $\dim(\mathscr{A})<\dim(\mathscr{S})$ (i.e., when the actual Kronecker rank is strictly
below the structural bound) would be practically useful.

\section*{Acknowledgments}
ME was supported through a Karen EDGE Fellowship. MK acknowledges support from NSF DMS 2410698.

\appendix
\section{A Worked Example: Toeplitz-plus-Hankel Outer Structure}
\label{app:t+h_worked}

We revisit the remark from Section~\ref{subsec:structural_bounds}. We demonstrate why $\mathscr{T}\subsetneq\mathscr{E}$ for a matrix with a block $5\times 5$
symmetric-Toeplitz-plus-persymmetric-Hankel outer structure and show in this case
 that $\dim(\mathscr{E})=9$ while $\dim(\mathscr{T})=8$.

\paragraph{Counting distinct blocks and deriving $\dim(\mathscr{E})$}
A $5\times 5$ symmetric Toeplitz matrix $\bA_T$ satisfies $[\bA_T]_{\gamma\delta} =
t_{|\gamma-\delta|}$ for parameters $t_0,\dots,t_4$, giving 5 free parameters.  A $5\times 5$
persymmetric Hankel matrix $\bA_H$ satisfies $[\bA_H]_{\gamma\delta} = h_{\gamma+\delta-1}$ for
parameters $h_1,\dots,h_5$, also giving 5 free parameters.  In the sum $\bC = \bA_T + \bA_H$,
an entry $[\bC]_{\gamma\delta}$ depends on the pair $(|\gamma-\delta|, \gamma+\delta-1)$.
Table~\ref{tab:pairs} lists all distinct such pairs for $\gamma,\delta\in\{1,\dots,5\}$; direct
enumeration yields exactly 9 distinct pairs (the pair $(0,5)$ at $\gamma=\delta=3$ is the one
whose constraint is derived below), giving $p=9$ distinct block positions.  Since the
location-tally matrices $\{\bE_k\}_{k=1}^9$ each record a distinct pattern of block positions
and are therefore linearly independent, $\dim(\mathscr{E}) = 9$.  For general even $q$, the
analogous count gives $\dim(\mathscr{E}) = q^2/4 + q/2$.

\begin{table}[h]
\centering
\small
\begin{tabular}{ccc}
\toprule
Position $(\gamma,\delta)$ (representative) & $|\gamma-\delta|$ & $\gamma+\delta-1$ \\
\midrule
$(1,1)$ & 0 & 1 \\
$(1,2),(2,1)$ & 1 & 2 \\
$(1,3),(3,1)$ & 2 & 3 \\
$(1,4),(4,1)$ & 3 & 4 \\
$(1,5),(5,1)$ & 4 & 5 \\
$(2,2)$ & 0 & 3 \\
$(2,3),(3,2)$ & 1 & 4 \\
$(2,4),(4,2)$ & 2 & 5 \\
$(3,3)$ & $0$ & $5$ \\
\bottomrule
\end{tabular}
\caption{The 9 distinct $(|\gamma-\delta|,\,\gamma+\delta-1)$ pairs for a $5\times 5$
symmetric-Toeplitz-plus-persymmetric-Hankel matrix, confirming $\dim(\mathscr{E})=9$.
The remaining positions, such as $(2,5),(3,4),(3,5),(4,5)$ and their transposes, produce
pairs such as $(3,6),(1,6),(2,7),(3,8)$ that are already listed above: Toeplitz symmetry
identifies $t_{|\gamma-\delta|}$ with the same parameter for transposed positions, and Hankel
persymmetry identifies $h_{\gamma+\delta-1}$ for positions with the same antidiagonal sum.
These structural identifications reduce the 25 block positions to the 9 listed pairs without
introducing new independent parameters.}
\label{tab:pairs}
\end{table}

\paragraph{The linear constraint and $\dim(\mathscr{T})$}
Label the nine distinct entry values of a generic $\bC = \bA_T + \bA_H$ as
$\gamma_i = t_{i-1} + h_{6-i}$ for $i=1,\dots,5$, and
$\gamma_6 = t_1+h_3$, $\gamma_7 = t_2+h_2$, $\gamma_8 = t_3+h_1$, $\gamma_9 = t_0+h_4$.
(Here $\gamma_9$ is the center entry $[\bC]_{3,3} = t_0+h_4$.)
These nine values are not independent: they satisfy the linear relation
\begin{equation}
\label{eq:t+h_constraint}
\gamma_6 + \gamma_8 - \gamma_3 = (t_1+h_3)+(t_3+h_1)-(t_3+h_3) = t_1+h_1 = \gamma_9,
\end{equation}
so $\dim(\mathscr{T}) = 8$.  In contrast, $\mathscr{E}$ imposes no such relation, giving
$\dim(\mathscr{E}) = 9 > \dim(\mathscr{T}) = 8$.  The matrix $\bX$ obtained from $\bC$ by
replacing the third-column (and third-row) entries with an independent value $t_6+h_6 \ne
t_3+h_3$ lies in $\mathscr{E}$ but violates~\eqref{eq:t+h_constraint}, confirming
$\mathscr{T}\subsetneq\mathscr{E}$.

\begin{lemma}
\label{lem:t+h_constraint}
Let $\bX\in\mathscr{E}$ with entries labeled $\gamma_1,\dots,\gamma_9$ as above.  Then
$\bX\in\mathscr{T}$ if and only if $\gamma_9 = \gamma_6 + \gamma_8 - \gamma_3$.
\end{lemma}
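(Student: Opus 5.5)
The plan is to identify both $\mathscr{E}$ and $\mathscr{T}$ with subspaces of $\mathbb{R}^9$ and compare them there. Coordinatize $\mathscr{E}$ by $\bX = \sum_{i=1}^9 \gamma_i \tilde{\bE}_i \mapsto (\gamma_1,\dots,\gamma_9)^\top$, where $\tilde{\bE}_i$ are the unscaled location-tally matrices of the nine position classes in Table~\ref{tab:pairs}. This map is an isomorphism because the $\tilde{\bE}_i$ have disjoint supports. In these coordinates, $\mathscr{T}$ is the image of the linear map
\[
\Psi\colon (t_0,\dots,t_4,h_1,\dots,h_5)\mapsto(\gamma_1,\dots,\gamma_9).
\]
Each $\gamma_i$ here is the sum $t_{(\cdot)}+h_{(\cdot)}$ read off at a representative position of class $i$. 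The lemma then says that $\mathrm{im}\,\Psi$ equals the hyperplane $\mathcal{H}=\{\gamma : \gamma_9-\gamma_6-\gamma_8+\gamma_3=0\}$.

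Before anything else, I will fix one consistent dictionary from positions to parameter pairs. That is, for each class I record precisely which $t$ and which $h$ occur, using $[\bC]_{\gamma\delta}=t_{|\gamma-\delta|}+h_{(\cdot)}$ with the persymmetric Hankel indexing. I will then check that this dictionary is compatible with the labels $\gamma_1,\dots,\gamma_9$ used in \eqref{eq:t+h_constraint}. I also need to confirm that every one of the $25$ positions falls into one of the nine classes with a single well-defined pair, so that $\mathscr{T}\subseteq\mathscr{E}$ genuinely holds.

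With the dictionary fixed, the proof has two directions.

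\emph{($\Rightarrow$)} If $\bX\in\mathscr{T}$, then $\gamma=\Psi(t,h)$ for some parameters. Substituting the dictionary into $\gamma_6+\gamma_8-\gamma_3$ should cancel to $\gamma_9$, which is the computation in \eqref{eq:t+h_constraint}. So $\mathrm{im}\,\Psi\subseteq\mathcal{H}$.

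\emph{($\Leftarrow$)} It suffices to show $\dim\mathrm{im}\,\Psi=8$, because $\mathcal{H}$ is $8$-dimensional and already contains the image. I will write $\Psi$ as a $9\times 10$ matrix of zeros and ones, one row per $\gamma_i$ and two ones per row. I will then show its rank is $8$ by exhibiting eight linearly independent rows, for instance $\gamma_1,\dots,\gamma_8$. For these, I would give an explicit triangular elimination: set some $h$'s to zero and solve successively for the $t$'s and remaining $h$'s. Equivalently, and more constructively, given any $\gamma\in\mathcal{H}$ I can exhibit a preimage by choosing $h_j=0$ for a suitable pair of indices and solving the resulting square system for the rest. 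The constraint then guarantees that the ninth equation is satisfied automatically.

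The kernel of $\Psi$ is $2$-dimensional, since $10-8=2$. It corresponds to the familiar non-uniqueness in splitting a matrix into Toeplitz and Hankel parts, for example trading a constant between $t$ and $h$. This is consistent with the rank-$8$ conclusion and can serve as a check.

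I expect the main obstacle to be the bookkeeping in the first step rather than the linear algebra. The labels as written mix index conventions: $\gamma_3$ is defined as $t_2+h_3$ but used in \eqref{eq:t+h_constraint} as $t_3+h_3$, and Hankel indices such as $h_6,\dots,h_9$ must be folded by persymmetry. So I would first recompute all nine class values directly from the $5\times5$ matrix. I would then restate the constraint in whatever corrected labeling results, with the coefficient vector $(0,0,1,0,0,-1,0,-1,1)$ playing the role of the functional up to relabeling. Only then would I run the rank computation.
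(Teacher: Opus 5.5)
Your proposal is correct. The ``only if'' direction is the same as the paper's, but for ``if'' your main argument takes a different route. The paper argues constructively: it sets $h_5=0$ and $t_0=\gamma_1$, back-substitutes through a chain of equations for the remaining parameters, and checks that the $(3,3)$ entry comes out as $\gamma_9$ under the constraint. You instead show $\mathrm{im}\,\Psi\subseteq\mathcal{H}$ and $\mathrm{rank}\,\Psi=8=\dim\mathcal{H}$, which gives equality at once and also proves $\dim(\mathscr{T})=8$.

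Recomputing the dictionary first is essential, not cosmetic. The paper's displayed labels include sums such as $t_1+h_3$ and $t_0+h_4$ that occur at no position. Taken literally, its nine $t$--$h$ pairs form a single path in the bipartite incidence graph, so they would satisfy no linear relation at all. Reading the values off the representatives in Table~\ref{tab:pairs} instead gives $\gamma_i=t_{i-1}+h_i$ for $i\le 5$, $\gamma_6=t_0+h_3$, $\gamma_7=t_1+h_4$, $\gamma_8=t_2+h_5$, and $\gamma_9=t_0+h_5$. Then $\gamma_6+\gamma_8-\gamma_3=\gamma_9$ holds verbatim, so your functional $(0,0,1,0,0,-1,0,-1,1)$ needs no relabeling.

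With this dictionary the graph has two components. The first is $\{t_0,t_2,t_4,h_1,h_3,h_5\}$, whose single cycle $t_0h_3t_2h_5$ is exactly the relation; the second is the tree $\{t_1,t_3,h_2,h_4\}$. Hence $\ker\Psi$ is two-dimensional (the constant and checkerboard directions), and your ``suitable pair'' must take one $h$ from each component. For example, $h_1=h_2=0$ forces $t_0=\gamma_1$, $t_1=\gamma_2$, $h_3=\gamma_6-\gamma_1$, $t_2=\gamma_3-h_3$, $h_5=\gamma_8-t_2$, $t_4=\gamma_5-h_5$, $h_4=\gamma_7-\gamma_2$, and $t_3=\gamma_4-h_4$. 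This shows rows $1$--$8$ are independent, and the leftover equation $t_0+h_5=\gamma_9$ is precisely the constraint. Under the correct dictionary, fixing only $h_5=0$ as the paper does leaves one parameter free, so its uniqueness claim does not hold.

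The constructive route yields explicit $\bT$ and $\bH$. Yours is immune to the bookkeeping slips in the paper's version, and the same component count recovers $\dim(\mathscr{T})=2q-2$ for every $q\ge 2$.
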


\begin{proof}
\emph{Only if.}  If $\bX = \bT+\bH$ with $\bT$ symmetric Toeplitz and $\bH$ persymmetric
Hankel, then the labeling above gives~\eqref{eq:t+h_constraint} directly.

\emph{If.}  Suppose $\gamma_9 = \gamma_6 + \gamma_8 - \gamma_3$.  We construct explicit
Toeplitz and Hankel parameters.  Set
\begin{equation}
\label{eq:toeplitz_params}
    t_0 = \gamma_1,\quad t_1 = \gamma_6 - h_3,\quad t_2 = \gamma_7 - h_2,\quad
    t_3 = \gamma_8 - h_1,\quad t_4 = \gamma_5 - h_1,
\end{equation}
where the Hankel parameters are determined by
\begin{equation}
\label{eq:hankel_params}
    h_5 = \gamma_1 - t_0,\quad h_4 = \gamma_2 - t_1,\quad h_3 = \gamma_3 - t_2,\quad
    h_2 = \gamma_4 - t_3,\quad h_1 = \gamma_5 - t_4.
\end{equation}
We must verify that this system is consistent.  From~\eqref{eq:hankel_params}, $h_3 = \gamma_3
- t_2$ and $h_1 = \gamma_5 - t_4$.  Substituting into $t_1 = \gamma_6 - h_3$ gives
$t_1 = \gamma_6 - \gamma_3 + t_2$, and substituting into $t_3 = \gamma_8 - h_1$ gives
$t_3 = \gamma_8 - \gamma_5 + t_4$.  These are two equations in $t_0,\dots,t_4$ and
$h_1,\dots,h_5$.  The system has a free parameter (say $t_0$); choosing $t_0 = \gamma_1 - h_5$
with $h_5 = \gamma_1 - t_0$ means $h_5$ is determined once $t_0$ is chosen.  Setting
$h_5 = 0$ fixes $t_0 = \gamma_1$, from which all other parameters follow uniquely.  Under the
constraint $\gamma_9 = \gamma_6 + \gamma_8 - \gamma_3$, one verifies directly that
$[\bT]_{33} = t_0$ and $[\bH]_{33} = h_4 = \gamma_9 - t_0$, giving $[\bT+\bH]_{33} = \gamma_9$,
as required.  All other entries follow from the Toeplitz and Hankel structure by
construction.  Therefore $\bX = \bT + \bH\in\mathscr{T}$.
\end{proof}

\bibliographystyle{plain}
\bibliography{references}

\end{document}

%% file: settings.tex
\usepackage{graphicx}
\usepackage{amsfonts}
\usepackage[mathscr]{eucal}
\usepackage{booktabs}
\usepackage{multirow}
\usepackage{amssymb}
\usepackage{subcaption}
\usepackage[normalem]{ulem}

\newsiamremark{remark}{Remark}
\newsiamremark{example}{Example}

\usepackage{subcaption}

\newcommand{\tens}[1]{\mathbf{\mathcal{#1}}}

\newcommand{\RR}[2]{\mathbb{R}^{#1 \times #2}}
\newcommand{\RRR}[3]{\mathbb{R}^{#1 \times #2 \times #3}}

\newcommand{\munfold}[2]{\mathbf{#1}_{(#2)}}
\newcommand{\lat}[2]{\overrightarrow{\tens{#1}}_#2}
\newcommand{\frontal}[2]{\mathbf{#1}^{[#2]}}

\def\bA{\mathbf{A}}
\def\bB{\mathbf{B}}
\def\bC{\mathbf{C}}
\def\bD{\mathbf{D}}
\def\bE{\mathbf{E}}
\def\bF{\mathbf{F}}
\def\bG{\mathbf{G}}
\def\bH{\mathbf{H}}
\def\bI{\mathbf{I}}

\def\bP{\mathbf{P}}
\def\bQ{\mathbf{Q}}

\def\bS{\mathbf{S}}
\def\bT{\mathbf{T}}
\def\bU{\mathbf{U}}
\def\bV{\mathbf{V}}
\def\bW{\mathbf{W}}
\def\bX{\mathbf{X}}
\def\bY{\mathbf{Y}}
\def\bZ{\mathbf{Z}}

\def\ba{\mathbf{a}}

\def\bc{\mathbf{c}}

\def\be{\mathbf{e}}
\def\bf{\mathbf{f}}

\def\bv{\mathbf{v}}

\def\bx{\mathbf{x}}

\def\bSigma{\mathbf{\Sigma}}

\def\bPhi{\mathbf{\Phi}}

\def\bzero{\mathbf{0}}